\newtheorem{definition}{Definition}
\newtheorem{theorem}{Theorem}
\newtheorem{corollary}{Corollary}
\newtheorem{example}{Example}
\newtheorem{lemma}{Lemma}
\newtheorem{remark}{Remark}
\newcommand{\ket}[1]{\left|#1\right\rangle}
\newcommand{\bra}[1]{\left\langle#1\right|}
\newcommand{\norm}[1]{\|#1\|}
\renewcommand{\vec}{\bm}
\newcommand{\diag}{\operatorname{diag}}
\newcommand{\rank}{\operatorname{rank}}
\newcommand{\Ad}{\operatorname{Ad}}
\newcommand{\Tr}{\operatorname{Tr}}
\renewcommand{\ss}{{\rm ss}}
\newcommand{\A}{\bm A}
\renewcommand{\L}{\bm L}
\renewcommand{\S}{\bm S}
\newcommand{\I}{\bm I}
\newcommand{\U}{\bm U}
\renewcommand{\H}{\mathbf{H}}
\newcommand{\V}{\mathcal{V}}
\newcommand{\W}{\mathcal{W}}
\newcommand{\bPhi}{\bm \Phi}
\newcommand{\T}{\bm T}
\begin{document}

\title{Robust Control Performance for Open Quantum Systems}
\author{S.~G.~Schirmer, \IEEEmembership{Member, IEEE},
F.~C.~Langbein, \IEEEmembership{Member, IEEE},
C.~A.~Weidner, \\
E.~Jonckheere, \IEEEmembership{Life Fellow, IEEE}
\thanks{This research was supported in part by NSF Grant IRES-1829078.}
\thanks{SGS is with the Faculty of Science \& Engineering, Swansea University, Swansea SA2 8PP, UK (e-mail: s.m.shermer@gmail.com).}
\thanks{FCL is with the School of Computer Science and Informatics, Cardiff University, Cardiff CF24 4AG, UK (e-mail: frank@langbein.org).}
\thanks{CAW is with the Quantum Engineering Technology Laboratories, University of Bristol, Bristol BS8 1FD, United Kingdom (e-mail: c.weidner@bristol.ac.uk).}
\thanks{EAJ is with the Department of Electrical and Computer Engineering, University of Southern California, Los Angeles, CA 90089 USA (e-mail: jonckhee@usc.edu).}
}
\maketitle

\begin{abstract}
Robust performance of control schemes for open quantum systems is investigated under classical uncertainties in the generators of the dynamics and nonclassical uncertainties due to decoherence and initial state preparation errors. A formalism is developed to measure performance based on the transmission of a dynamic perturbation or initial state preparation error to the quantum state error. This makes it possible to apply tools from classical robust control such as structured singular value analysis. A difficulty arising from the singularity of the closed-loop Bloch equations for the quantum state is overcome by introducing the \#-inversion lemma, a specialized version of the matrix inversion lemma. Under some conditions, this guarantees continuity of the structured singular value at $s = 0$. Additional difficulties occur when symmetry gives rise to multiple open-loop poles, which under symmetry-breaking unfold into single eigenvalues. The concepts are applied to systems subject to pure decoherence and a general dissipative system example of two qubits in a leaky cavity under laser driving fields and spontaneous emission. A nonclassical performance index, steady-state entanglement quantified by the concurrence, a nonlinear function of the system state, is introduced. Simulations confirm a conflict between entanglement, its log-sensitivity and stability margin under decoherence.
\end{abstract}

\begin{IEEEkeywords}
Quantum information and control, uncertain systems, robust control, H-infinity control.
\end{IEEEkeywords}

\section{Introduction}

\IEEEPARstart{Q}{uantum} control offers techniques to steer the dynamics of quantum systems. This is essential for enabling a wide range of applications for quantum technologies. However, uncertainties arising from limited knowledge of Hamiltonians, decoherence processes and initial state preparation errors impact the effectiveness of the control schemes. While classical robust control has developed effective solutions for such situations that apply relatively easily to quantum optics~\cite{Petersen2013}, they do not apply straightforwardly to other areas of quantum control such as spin systems. To consider the robustness of quantum control strategies in the presence of uncertainties, we develop a formalism where the performance is measured by the transmission $\T_{\vec{z},\vec{w}}(s,\delta)$ from the dynamic perturbation $\vec{w}$ (including state preparation errors) to the error $\vec{z}$ on the quantum state when such transmission is subject to structured uncertainties of strength $\delta$. It is tacitly assumed that this response has been made $H^\infty$-small by the control design under nominal values of the parameters in the Hamiltonian and decoherence. Robust performance is therefore defined as the ability of $\T_{\vec{z},\vec{w}}(\imath\omega,\delta)$ to remain within identifiable bounds for $\delta \ne 0$. Since uncertainties in the Hamiltonians and Lindbladians are often \emph{structured}, it is natural to quantify robustness of the performance using structured singular values. A generic difficulty that arises for quantum systems is that trace conservation of the density matrix $\rho$ imposes a closed-loop pole at $s=0$ in the $\T_{\vec{z},\vec{w}}(s,\delta)$ dynamics. This creates a singularity in the dynamics at low frequencies, $\omega \approx 0$, mandating some revision of the traditional machinery of structured singular values and a special \emph{matrix \#-inversion lemma}, similar to, but distinct from the matrix pseudo-inversion lemma~\cite{matrix_pseudo_inversion_lemma, operator_pseudo_inversion_lemma}. Other difficulties addressed by our formalism include multiple poles, either structurally stable like the pole at $s=0$ or removable by perturbation of physically meaningful parameters. We further demonstrate applicability of the various concepts to two cases that have no classical counterparts: pure dephasing acting in the Hamiltonian basis (i.e., an eigenbasis of the Hamiltonian) and dissipative cavity dynamics. A deeper underlying issue is whether the classical limitation of conflict between performance and sensitivity of performance to uncertainties holds in coherent quantum control and in the presence of decoherence.

After reviewing quantum dynamics in Sec.~\ref{sec:quantum_dynamics}, the general error dynamics with transfer matrix $\T_{\vec{z},\vec{w}}$ that should be robust against uncertainties in the Hamiltonian~\cite{Edmond_IEEE_AC} and decoherence~\cite{CDC_decoherence} is introduced in Sec.~\ref{sec:modelling}. Preparation error response requires a different formulation departing from classical robust performance as in~\cite{ssv_mu,Zhou}. In Sec.~\ref{sec:pure_dephasing}, the case of pure dephasing in an eigenbasis of the Hamiltonian is developed and analytic bounds for the error transmission $\T_{\vec{z},\vec{w}}$ are derived. Sec.~\ref{sec:general_dissipative} deals with generic dissipative quantum systems and develops a generalized framework to deal with the $s=0$ singularity. In Sec.~\ref{sec:2qubits}, robust performance for generic dissipative dynamics is illustrated by the case study of two qubits in a cavity. This simple example allows the formulation of another novelty in robust control: a nonlinear performance index in the form of the concurrence, a measure of entanglement. We note, however, that the analysis here can also be applied to linear measures such as fidelity.

\section{Quantum Dynamics and Uncertainties} \label{sec:quantum_dynamics}

\subsection{Schr\"odinger and Lindblad Equations}

The dynamics of a quantum system whose pure states $\ket{\Psi(t)}$ are wavefunctions in a Hilbert space $\H=\mathbb{C}^N$, are typically described by the Schr\"odinger equation, $\tfrac{d}{dt}\ket{\Psi(t)} = -\imath H \ket{\Psi(t)}$ (in a system of units where the reduced Planck constant $\hbar=1$), or the Liouville-von Neumann equation for density operators $\rho$,
\begin{equation}\label{e:Liouville}
  \tfrac{d}{dt} \rho(t) = -\imath [H,\rho(t)].
\end{equation}
Here, $H$ is the Hamiltonian of the system, $\rho$ is a (bounded) Hermitian operator on $\H$ with $\Tr(\rho)=1$ and $[A,B] = AB-BA$ is the usual matrix commutator. For pure states, the density operator is simply the projector onto $\ket{\Psi}$, i.e., $\rho = \ket{\Psi}\bra{\Psi}$. If $\dim\H=N<\infty$, $H$ and $\rho$ can be represented by $N\times N$ Hermitian matrices. The Liouville-von Neumann formulation can easily be extended to describe open system dynamics by adding Lindbladian terms to the right-hand side of Eq.~\eqref{e:Liouville},
\begin{equation}\label{e:QME2}
  \mathfrak{L}(V_k) \rho = V_k \rho V_k^\dag -\tfrac{1}{2} (V_k^\dag V_k \rho+\rho V_k^\dag V_k).
\end{equation}
This results in the Lindblad master equation
\begin{equation}~\label{e:QME1}
  \frac{d}{dt}\rho(t)=-\imath [H,\rho(t)] + \sum_{k}\gamma^2_{k} \mathfrak{L}(V_k)\rho(t),
\end{equation}
where the $V_k$'s are the jump operators~\cite{open_geometric_phase} and the $\gamma^2_{k}$'s are the decoherence rates, which can be interpreted as the \emph{strengths} of the \emph{structured} perturbations defined by $\mathfrak{L}(V_k)$.

The most natural formulation of quantum control is through a control-dependent Hamiltonian $H = H[\vec{f}]$, which, in its simplest form $H[\vec{f}] = H_0 + \sum_{m=1}^M f_m(t) H_m$, is linear in the controls $f_m(t)$, leading to a bilinear system. The system Hamiltonian is modified by $\vec{f}$, which can also alter the system-bath interaction, leading, in the simplest case, to a control-dependent Lindbladian
\begin{equation}\label{e:QME1contr}
  \frac{d}{dt}\rho(t)= -\imath [H[\vec{f}],\rho(t)] + \sum_{k}\gamma^2_{k} \mathfrak{L}(V_k[\vec{f}])\rho(t).
\end{equation}
For general system-bath interactions, the control $\vec{f}$ may result in a structurally different master equation~\cite{domenico_CDC,singular_vs_weak_coupling}.

In the remainder of the paper, we adopt Eq.~\eqref{e:QME1contr} with time-independent  control fields $f_m$.

\subsection{Initial State Preparation Errors versus Dephasing}

When studying quantum dynamics subject to structured perturbations, note that some perturbations may be indistinguishable, e.g., initial state preparation error and decoherence. If we measure the difference between an actual state $\tilde{\rho}$ and a desired state $\rho$ at a certain time, we cannot determine if the error is the result of dynamic dephasing of a perfectly prepared pure initial state, a mixed initial state evolving perfectly according to unitary dynamics, or a combination of both. This is illustrated with a simple example that also serves as a ``warm-up exercise.''

\begin{example}
Consider a two-level system with $\ket{0}=[1,0]^T$ and $\ket{1}=[0,1]^T$. Assume the system is prepared in a pure state $\ket{\psi_0}=(\ket{0}+\ket{1})/\sqrt{2}$ and evolves under the Hamiltonian $H=\omega\sigma_z$ with $\sigma_z =\diag(-1,1)$ while dephasing $V=\sigma_z$ acts in the Hamiltonian basis ($[H,V]=0$) at a rate $\delta=\gamma^2$. In this case, Eq.~\eqref{e:QME1} gives
\begin{equation*}
  \rho_0 = \rho(0) = \frac{1}{2}\begin{bmatrix} 1 & 1 \\ 1 & 1\end{bmatrix}, \quad
  \rho(t) = \frac{1}{2}\begin{bmatrix} e^{2\imath\omega t} & e^{-2\delta t} \\ e^{-2\delta t} & e^{-2\imath \omega t}\end{bmatrix}.
\end{equation*}
Alternatively, a mixed initial state $\tilde{\rho}_0$ evolving under the same Hamiltonian $H=\omega \sigma_z$ without dephasing results in
\begin{equation*}
  \tilde{\rho}_0 = \tilde{\rho}(0) = \frac{1}{2}\begin{bmatrix} 1 & e^{-\tau\delta} \\ e^{-\tau\delta} & 1\end{bmatrix}, \quad
  \tilde{\rho}(t) = \frac{1}{2}\begin{bmatrix} e^{2\imath\omega t} & e^{-\tau\delta} \\ e^{-\tau\delta} & e^{-2\imath\omega t}\end{bmatrix}.
\end{equation*}
Clearly, the two solutions are indistinguishable at $t=\tau/2$.
\end{example}

\subsection{Bloch Equation}\label{sec:Bloch}

To simplify the analysis we reformulate the dynamics Eq.~\eqref{e:QME1contr} as a linear ODE for a real state vector $\vec{r}$, derived from the classical technique~\cite{PhysRevA.93.063424,PhysRevA.81.062306,JPhysA37,neat_formula} of expanding $\rho$ and the Hamiltonian with respect to a suitable basis for the operators on the Hilbert space $\H$ such as the generalized Pauli matrices~\cite{Gell_Mann_matrices}. Let $\{\sigma_n\}_{n=1}^{N^2}$ be an orthonormal basis for the $N\times N$ Hermitian matrices with $\sigma_{N^2} = \tfrac{1}{\sqrt{N}} I$. Then defining $\vec{r}=(r_n)_{n=1}^{N^2}$ with $r_n = \Tr(\sigma_n\rho)$ and
\begin{subequations}\label{e:ASofHV}
\begin{alignat}{2}
  \A_H[\vec{f}]_{mn} &= \mathrm{Tr} && (\imath H[\vec{f}][\sigma_m,\sigma_n]),\\
  (\L_{k}[\vec{f}])_{mn} &= \mathrm{Tr}&&\mathopen{}\mathclose{\left(V_k^\dag[\vec{f}]\sigma_mV_k[\vec{f}]\sigma_n \right.}\\\nonumber
  & &&~\left. -\tfrac{1}{2}V^\dag_k[\vec{f}]V_k[\vec{f}]\{\sigma_m,\sigma_n\}\right),
\end{alignat}
\end{subequations}
leads to the state-space equation,
\begin{equation}\label{eq:Bloch1}
  \tfrac{d}{dt}{\vec{r}}(t) = \left(\A_H[\vec{f}]+\gamma^2 \A_L[\vec{f}]\right)\vec{r}(t),
\end{equation}
where $\gamma^2 \A_L[\vec{f}]$ is short for $\sum_{k}\gamma^2_{k} \L_{k}[\vec{f}]$. Observe that $\A_H$ is real and anti-symmetric while $\A_L$ is symmetric.

From $r_{N^2} = \Tr(N^{-1/2} \rho) = N^{-1/2}$, it follows that $\tfrac{d}{dt}{r}_{N^2}(t)\equiv 0$ and the last row of $\A_H+\gamma^2 \A_L$ vanishes for all $\gamma$. Thus, in general, $\rank(\A_H+\gamma^2 \A_L)\leq N^2-1$, $\forall \gamma$. Moreover, $\rank(\A_H)\leq N^2-1$ and $\rank(\A_L)\leq N^2-1$, indicating that this rank deficiency is a consequence of the choice of the basis operator $\sigma_{N^2}$ and independent of the dynamical generators.

If the linearity of Eq.~\eqref{eq:Bloch1} relative to $\vec{f}$ is preserved, the RHS of the master equation can be rewritten as
\begin{equation}\label{eq:Bloch0}
  \frac{d}{dt}{\vec{r}}(t) = \left( \A_H[0]+\gamma^2 \A_L[0] +\sum_{m=1}^M f_m(t) \A_m\right) \vec{r}(t),
\end{equation}
where the $\A_m$'s are obtained by adaptation of Eqs~\eqref{e:ASofHV}, and
\begin{equation} \label{eq:fb}
  \left[ \sum_{m=1}^M f_m(t) \A_m \right] \vec{r}(t) =: \vec{u}(\vec{r}(t),t)
\end{equation}
can be interpreted as state feedback. Since the $f_m$ are time-independent controls, the resulting system is a linear time-invariant (LTI) system with autonomous state feedback~\cite{Edmond_IEEE_AC}. Note, however, that controlling such systems by choosing some $f_m$ to produce specified eigenvalues for $\A_H[0]+\gamma^2 \A_L[0]+\sum_m f_m \A_m$ does not reduce to the well-known controllability pole placement method~\cite{bilinear_constant_input}.

\subsection{Genericity of the Eigenstructure}

Quantum systems often have multiple eigenvalues and degeneracies. It is useful to distinguish two cases. \textit{Structurally stable} degeneracies are those that cannot be eliminated by invoking parameter drift. An example of this is the state transition matrix $\bm A_H+\gamma^2 \bm A_L$ of the Bloch equation, which always has at least one zero eigenvalue due to trace conservation. An $N$-level system subject to pure dephasing in the Hamiltonian basis always has $N$ zero-eigenvalues, regardless of perturbations of the Hamiltonian or dephasing rates. Other degeneracies are \textit{structurally unstable} and can be lifted by perturbations. Consider a quantum system with Hilbert-space dimension $N$. Let $\{\Pi_k\}_{k=1}^{\bar{N}\leq N}$ be the family of projectors onto the respective eigenspaces of the Hamiltonian associated with the eigenvalues $\lambda_k(H)$ with $\sum_{k=1}^{\bar{N}}n_k=N$. For many systems subject to symmetries such as the Hamiltonians of chains and rings~\cite{chains_QINP,rings_QINP} under uniform $J$-coupling, the eigenvalues will have multiplicities $n_k=\rank(\Pi_k)>1$. Such multiplicities can be removed by modifying the $J$-couplings or other relevant parameters to break the symmetry. In general, three parameters are necessary to split a structurally unstable eigenvalue with multiplicity greater than one~\cite{von_neumann_wigner}. This statement is made rigorous in Appendix~\ref{a:multiple}. Conversely, under parameter drift, we cannot rule out eigenvalues of $H$ crossing at a particular time, which raises the question whether a continuous eigenbasis for their respective eigenspaces exists. This question was answered affirmatively by Dole\v{z}al's theorem and its generalization~\cite{Dolezal1,Dolezal2}, details of which can be found in Appendix~\ref{a:Dolezal}.

\section{Robust Performance in Open Systems} \label{sec:modelling}

Our goal is to quantify robust performance of open quantum systems in the presence of \emph{structured} perturbations. Decoherence can be treated as a structured perturbation for a closed quantum system. Many other uncertainties in the system or control Hamiltonian can also be treated as structured perturbations. For example, we can model uncertainty as $H_\theta$, where $H_\theta$ depends affinely on a parameter $\theta \in \Theta \subset \mathbb{R}$, that is, $H_\theta=H_{\theta_0}+(\theta-\theta_0)S=H_{\theta_0}+\delta S$, where $\theta_0$ is the nominal value. Such a decomposition maps, via Eq.~\eqref{e:ASofHV}, to the Bloch state transition matrix.

We can quantify closed-loop performance of a quantum system operating under uncertainties by a $\delta$-dependent disturbance-to-error transmission function $\T_{\vec{z},\vec{w}}(s,\delta)$. Consider an unperturbed and a perturbed version of Eq.~\eqref{eq:Bloch0} with state vectors $\vec{r}_u$ and $\vec{r}_p$, respectively, evolving according to
\begin{subequations}\label{eq:Bloch3}
  \begin{align}
    \tfrac{d}{dt} {\vec{r}}_u(t) &= \A \vec{r}_u(t),\\
    \tfrac{d}{dt} {\vec{r}}_p(t) &= (\A + \delta \S)\vec{r}_p(t),
  \end{align}
\end{subequations}
where $\A$ is the Bloch operator for the ideal unperturbed system and $\delta \S$ is a perturbation to the dynamics of structure $\S$ and magnitude $\delta$ in the Bloch representation. The error vector $\vec{z}(t) = \vec{r}_p(t)-\vec{r}_u(t)$ satisfies either dynamical representation
\begin{subequations}
  \begin{align}
    \tfrac{d}{dt}{\vec{z}}(t) &= (\A + \delta \S) \vec{z}(t) + \delta \S \vec{w}_u(t),\label{eq:A1} \\
    \tfrac{d}{dt}{\vec{z}}(t) &= \A \vec{z}(t) + \delta \S \vec{w}_p(t),\label{eq:A2}
  \end{align}
\end{subequations}
driven by $\vec{w}_u(t)=\vec{r}_u(t)$ and $\vec{w}_p(t) = \vec{r}_p(t)$, respectively. Projecting $\vec{z}$ on some desired subspace yields the error in \emph{fidelity} due to uncertainties. Here, we adopt the \emph{unperturbed} formulation of Eq.~\eqref{eq:A1}.
The perturbed formulation is available in~\cite{CDC2021_mu}.

Taking the Laplace transform of Eq.~\eqref{eq:A1} yields
\begin{equation}\label{eq:sys1}
  (s \I - \A-\delta \S) \vec{\hat{z}}(s) = \delta \S \vec{\hat{w}}_u(s) + \vec{z}(0).
\end{equation}
If $s\I-\A-\delta\S$ is invertible and there is no initial state preparation error, $\vec{z}(0)=\vec{0}$, then
\begin{equation}\label{e:clue}
  \vec{\hat{z}}(s) = \T_{\vec{z},\vec{w}_u}^u(s,\delta) \vec{\hat{w}}_u(s),
\end{equation}
where the transfer matrix $\T_{\vec{z},\vec{w}_u}^u$ is given by
\begin{align} \label{e:scaling1}
  \T_{\vec{z},\vec{w}_u}^u
  &:= (s \I-\A-\delta \S)^{-1}\delta \S \\\nonumber
  &= \left[(s \I-\A-\delta \S)^{-1}-(s \I-\A)^{-1}\right] \left[(s \I-\A)^{-1}\right]^{-1},
\end{align}
scaling the error relative to the known system.
If there are initial state preparation errors, i.e., if $\vec{z}(0) \neq 0$ in Eq.~\eqref{eq:A1}, but $s\I-\A-\delta \S$ is invertible then
\begin{align}\label{eq:x0m1}
  \vec{\hat{z}}(s)
  &= \T_{\vec{z},\vec{w}_u}^u(s,\delta) \vec{\hat{w}}_u(s) + \T_{\vec{z},\vec{z}_0}^u(s,\delta) \vec{z}(0),
\end{align}
if we set $\T_{\vec{z},\vec{z}_0}^u(s,\delta) = (s\I-\A-\delta\S)^{-1}$ as in~\cite{ssv_mu}. The transfer function quantifies how errors due to imperfect dynamics or initial state preparation (Eq.~\eqref{eq:x0m1}) scale as functions of frequency and what the system's critical frequencies are.

The formulation of Eq.~\eqref{e:clue} also enables structured singular value analysis~\cite{ssv_mu,Zhou}. If $\bPhi(s):=s\I-\A$ is invertible then
\begin{align}\label{eq:MIL}
  (\bPhi(s)-\delta \S)^{-1}\delta\S
  &= [\bPhi(s)(\I-\bPhi(s)^{-1}\delta\S)]^{-1} \delta\S \nonumber \\
  &=(\I-\bPhi^{-1}(s) \delta \S)^{-1}\bPhi^{-1}(s) \delta \S.
\end{align}
This reveals that the error response $\T^u_{\vec{z},\vec{w}_u}$ is obtained from
\begin{equation}\label{e:Gzwu}
  \begin{pmatrix}\vec{\hat{v}}\\\vec{\hat{z}}(s)\end{pmatrix}
  =\underbrace{\begin{pmatrix}\bPhi^{-1}(s)\S & \bPhi^{-1}(s)\S\\\I & \bm 0 \end{pmatrix}}_{G_{\vec{z},\vec{w}_u}(s)}
   \begin{pmatrix} \vec{\hat{\eta}} \\ \vec{\hat{w}}_u(s)\end{pmatrix},
\end{equation}
with feedback $\vec{\hat{\eta}}=(\delta \I) \vec{\hat{v}}$. Following standard robust control, $\|\T^u_{\vec{z},\vec{w}_u}(s,\delta)\|$ can be computed as $1/\min\{\|\Delta_f\|: \det(\I+\T^u_{\vec{z},\vec{w}_u}(s,\delta)\Delta_f)=0\}$ where $\vec{w}_u=\Delta_f \vec{z}$ is a \emph{fictitious} feedback wrapped around $G_{\vec{z},\vec{w}_u}$, and $\Delta_f$ is a fully populated complex matrix. Both feedbacks can be combined in a matrix $\bm \Delta=\begin{pmatrix}\delta \I & 0\\0 & \Delta_f\end{pmatrix}$, which defines the set $\mathcal{D}$ of block-diagonal matrices $\diag(\bm M_1,\bm M_2)$, where $\bm M_1$ and $\bm M_2$ are $N^2\times N^2$ matrices with $\bm M_1$ real scalar and $\bm M_2$ fully populated and complex. The structured singular value specific to $\mathcal{D}$,
\begin{equation*}
  \mu_{\mathcal{D}}(G_{\vec{z},\vec{w}_u}(s))
  =\frac{1}{\min \{\|\bm\Delta \in \mathcal{D}\| : \det(\I+G_{\vec{z},\vec{w}_u}(s) \bm \Delta)=0 \}}
\end{equation*}
is a measure of robust performance~\cite[Th. 10.8]{Zhou}:
\begin{theorem}\label{th:robust_performance}
If $\bPhi(s)$ is invertible then $\|\T^u_{\vec{z},\vec{w}}(s,\delta)\| \leq \mu_{\mathcal{D}}(G_{\vec{z},\vec{w}_u}(s))$ for all $\delta < [\mu_{\mathcal{D}}(G_{\vec{z},\vec{w}_u}(s))]^{-1}$.
\end{theorem}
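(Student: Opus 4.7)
The plan is to recognize Theorem~\ref{th:robust_performance} as an instance of the Main Loop (robust performance) theorem of $\mu$-synthesis, specialized to the LFT $G_{\vec{z},\vec{w}_u}(s)$ displayed in Eq.~\eqref{e:Gzwu}. First I would identify $\T^u_{\vec{z},\vec{w}_u}(s,\delta)$ as the upper LFT of $G_{\vec{z},\vec{w}_u}(s)$ closed around the physical uncertainty block $\delta\I$: reading off $G_{11}=G_{12}=\bPhi^{-1}(s)\S$, $G_{21}=\I$, $G_{22}=\bm 0$ and imposing $\vec{\hat{\eta}}=\delta\I\,\vec{\hat{v}}$ recovers exactly
\begin{equation*}
\T^u_{\vec{z},\vec{w}_u}(s,\delta) = (\I-\delta\bPhi^{-1}(s)\S)^{-1}\delta\bPhi^{-1}(s)\S,
\end{equation*}
in agreement with Eq.~\eqref{eq:MIL}. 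With this identification the genuine $\delta\I$ loop and the fictitious $\Delta_f$ loop wrapped around $\T^u_{\vec{z},\vec{w}_u}$ are simply the two diagonal blocks of a single structured feedback $\bm\Delta=\diag(\delta\I,\Delta_f)\in\mathcal{D}$ closed around $G_{\vec{z},\vec{w}_u}(s)$, and since $\det(\I+\bm M\bm\Delta')=\det(\I-\bm M(-\bm\Delta'))$ the sign convention in the definition of $\mu_{\mathcal{D}}$ is immaterial.

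The core of the proof is then contraposition at fixed $s$. Suppose there is a $\delta<[\mu_{\mathcal{D}}(G_{\vec{z},\vec{w}_u}(s))]^{-1}$ with $\|\T^u_{\vec{z},\vec{w}_u}(s,\delta)\|>\mu_{\mathcal{D}}(G_{\vec{z},\vec{w}_u}(s))$. The standard spectral-norm identity $\|\bm M\|=1/\min\{\|\Delta_f\|:\det(\I-\bm M\Delta_f)=0\}$ yields a fully populated complex $\Delta_f$ with $\|\Delta_f\|<[\mu_{\mathcal{D}}(G_{\vec{z},\vec{w}_u}(s))]^{-1}$ satisfying $\det(\I-\T^u_{\vec{z},\vec{w}_u}(s,\delta)\Delta_f)=0$. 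Assembling $\bm\Delta=\diag(\delta\I,\Delta_f)\in\mathcal{D}$ gives $\|\bm\Delta\|=\max(\delta,\|\Delta_f\|)<[\mu_{\mathcal{D}}(G_{\vec{z},\vec{w}_u}(s))]^{-1}$, and the Schur-complement factorisation of the $2\times 2$ block determinant reads
\begin{equation*}
\det(\I-G_{\vec{z},\vec{w}_u}(s)\bm\Delta) = \det(\I-\delta\bPhi^{-1}(s)\S)\,\det(\I-\T^u_{\vec{z},\vec{w}_u}(s,\delta)\Delta_f),
\end{equation*}
which vanishes because the second factor does. This exhibits a $\bm\Delta\in\mathcal{D}$ of norm strictly below $[\mu_{\mathcal{D}}(G_{\vec{z},\vec{w}_u}(s))]^{-1}$ that renders $\I-G_{\vec{z},\vec{w}_u}(s)\bm\Delta$ singular, contradicting the definition of the structured singular value.

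The step I expect to be the main obstacle is the admissibility of the factorisation: the Schur complement above is legitimate only when $\I-\delta\bPhi^{-1}(s)\S$ is itself invertible, i.e.\ when the physical loop alone has not yet been destabilised by $\delta$. This is automatic on the range of interest, however, because if $\I-\delta\bPhi^{-1}(s)\S$ were singular then $\bm\Delta=\diag(\delta\I,\bm 0)\in\mathcal{D}$ would already make $\I-G_{\vec{z},\vec{w}_u}(s)\bm\Delta$ singular with $\|\bm\Delta\|=\delta$, forcing $\mu_{\mathcal{D}}(G_{\vec{z},\vec{w}_u}(s))\geq 1/\delta$ and contradicting the hypothesis $\delta<[\mu_{\mathcal{D}}(G_{\vec{z},\vec{w}_u}(s))]^{-1}$. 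The standing assumption that $\bPhi(s)$ is invertible enters precisely in making $G_{\vec{z},\vec{w}_u}(s)$ well-defined at the frequency in question; the complementary case $s=0$, where $\bPhi(s)$ is singular by trace conservation, is exactly what motivates the $\#$-inversion machinery developed in Sec.~\ref{sec:general_dissipative}.
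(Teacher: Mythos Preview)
Your argument is correct and is precisely the standard Main Loop / robust performance argument from $\mu$-analysis. The paper does not supply its own proof of Theorem~\ref{th:robust_performance}; it simply cites the result as \cite[Th.~10.8]{Zhou}, so your reconstruction is essentially the proof one finds in that reference.
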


Similarly, $\vec{\hat{z}}_{\vec{z}_0} = \T_{\vec{z},\vec{z}_0}^u(s,\delta)\vec{z}(0)$ can be obtained from
\begin{equation}\label{e:Gzz0}
  \begin{pmatrix} \vec{\hat{v}}_1 \\ \vec{\hat{z}}_{\vec{z}_0}(s) \end{pmatrix}=
  \underbrace{\begin{pmatrix}\S\bPhi^{-1}(s) & \S\bPhi^{-1}(s)\\
  \bPhi^{-1}(s) & \bPhi^{-1}(s) \end{pmatrix}}_{G_{\vec{z},\vec{z}_0}(s)}
  \begin{pmatrix} \vec{\hat{\eta}} \\ \vec{z}_0\end{pmatrix},
\end{equation}
with feedback $\vec{\hat{\eta}}=(\delta \I) \vec{\hat{v}}_1$ if $\bPhi(s)$ is invertible. Moreover, $\|\T_{\vec{z},\vec{z}_0}^u(s,\delta)\|$ is simultaneously computed via the compound feedback $\bm \Delta$ as defined above, and the related robust performance theorem is a straightforward adaptation of Th.~\ref{th:robust_performance}.

However, these results are \emph{not} directly useful for quantum systems because the Bloch matrix $\A$ of an open quantum system that enforces constancy of the trace always has an eigenvalue at $0$, so that $\bPhi^{-1}(s)$ has a pole at $s=0$, invalidating Th.~\ref{th:robust_performance}. The heuristic approach $s \approx 0$ has been  attempted~\cite{CDC_decoherence,ssv_mu}, but proper treatment requires a modification of the matrix inversion lemma. Some matrix pseudo-inversion lemmas~\cite{matrix_pseudo_inversion_lemma, operator_pseudo_inversion_lemma} have been proposed but do not apply in this context because their application is restricted to symmetric matrices. To address this issue, a specialized \emph{matrix \#-inversion lemma} is proposed in Secs.~\ref{sec:pseudo_inversion} and~\ref{sec:sing_prep_error}.

\section{Pure Dephasing in the Hamiltonian Basis} \label{sec:pure_dephasing}

In this section we study the performance of controlled quantum systems subject to dephasing, a typically undesired behavior commonly encountered for quantum systems interacting weakly with an environment. In this special case we can assume that dephasing acts in the eigenbasis of the Hamiltonian, i.e., $[H_\theta, V_\theta] =0$, where $\theta$ is a real uncertain parameter affecting the Hamiltonian and hence the decoherence. We treat dephasing as a perturbation to the closed system dynamics, emphasizing the decomposition $\A_\theta + \delta \S_\theta$, where $\A_\theta$ is the Bloch matrix corresponding to $H_\theta$ and $\delta\S_\theta$ is the Bloch matrix corresponding to $\gamma^2\mathfrak{L}(V_\theta)$ where $\delta=\gamma^2$.

\subsection{Pure Dephasing in the Hamiltonian Formulation}

By Dole\v{z}al's theorem in Appendix~\ref{a:Dolezal}, eigenvectors $\{\vec{u}_k(\theta)\}_{k=1}^N$ of $H_\theta$ can be chosen to form an orthonormal basis such that the unitary operator $U_\theta = (\vec{u}_1(\theta) \cdots \vec{u}_N(\theta))$ depends continuously on $\theta$. From $U_\theta$, in accordance with Lemma~\ref{l:PQQP}, we construct
\begin{equation*}
  V_\theta = U_\theta\diag(\lambda_1(V_\theta),\dotsc,\lambda_N(V_\theta))U_\theta^\dagger,
\end{equation*}
which secures $[H_\theta,V_\theta]=0$. The eigenvalues $\lambda_k(V_\theta)$ can be chosen arbitrarily provided they are real, positive, and their multiplicities are consistent with those of $H_\theta$.

From the eigenvectors $\vec{u}_k(\theta)$, one can construct a set of projectors $\{\Pi_k(H_\theta)\}_{k=1}^{\bar{N}\leq N}$ onto the (orthogonal) simultaneous eigenspaces of $H_\theta$ and $V_\theta$ such that $\sum_{k=1}^{\bar{N}}\Pi_k(H_\theta)=I_{\mathbb{C}^{N}}$ is a resolution of the identity on the full Hilbert space $\H$ and
\begin{equation*}
  H_\theta = \sum_{k=1}^{\bar{N}} \lambda_k(H_\theta) \Pi_k(H_\theta), \quad
  V_\theta = \sum_{k=1}^{\bar{N}} \lambda_k(V_\theta) \Pi_k(H_\theta).
\end{equation*}
$\lambda_k(H_\theta)$ and $\lambda_k(V_\theta)$ are the respective real eigenvalues of $H_\theta$ and $V_\theta$. $\bar{N}\leq N$ is the number of \emph{distinct} eigenvalues of $H_\theta$. To simplify the notation we now drop the $\theta$-dependency.

Pre-/post-multiplying the master Eq.~\eqref{e:QME1} with Lindblad term~\eqref{e:QME2} by $\Pi_k(H)$ and $\Pi_\ell(H)$, respectively, yields
\begin{equation}\label{e:rho_k_ell}
  \Pi_k(H)\dot{\rho}(t) \Pi_{\ell}(H)= (-\imath \omega_{k\ell}+\delta\gamma_{k\ell}) \Pi_k(H) \rho(t) \Pi_\ell(H),
\end{equation}
with ${\omega}_{k\ell}= \lambda_k(H)-\lambda_\ell(H)$ and ${\gamma}_{k\ell}=-\tfrac{1}{2}(\lambda_k(V)-\lambda_\ell(V))^2\le 0$. The solution to this equation is
\begin{equation*}
  \Pi_k(H)\rho(t)\Pi_\ell(H) = e^{-t(\imath {\omega}_{k\ell}-\delta{\gamma}_{k\ell})} \Pi_k(H) \rho_0 \Pi_\ell(H).
\end{equation*}
Since $\sum_{k=1}^{\bar{N}} \Pi_k(H) = I$, the full solution is found as $\rho(t)=\sum_{k,\ell=1}^{\bar{N}}\Pi_k(H)\rho(t)\Pi_\ell(H)$, which yields
\begin{equation}\label{e:varrho_of_t_solution}
  \rho(t) = \sum_{k,\ell=1}^{\bar{N}} e^{-t(\imath {\omega}_{k\ell}-\delta{\gamma}_{k\ell})} \Pi_k(H) \rho_0 \Pi_\ell(H).
\end{equation}
Moreover, from the above it is easily verified that $\Pi_k\rho(t)\Pi_k=\Pi_k\rho_0\Pi_k$ for $k=1,\dotsc,\bar{N}$. Therefore, as $\sum_{k=1}^{\bar{N}}\Pi_k(H) =I_{\mathbb{C}^{N}}$, the solution $\rho(t)$ has $\bar{N}$ invariant subspaces.

\subsection{Pure Dephasing in the Bloch Representation}\label{s:pure_dephasing}


\begin{lemma}\label{l:orthog}
Let $P$ and $Q$ be $N \times N$ Hermitian operators with $\Tr(P^\dagger Q)=0$. Then their Bloch vectors $\vec{p}=(\Tr(P^\dagger \sigma_n))_{n=1}^{N^2}$, $\vec{q}=(\Tr(Q^\dagger \sigma_n))_{n=1}^{N^2}$ are orthogonal, $\vec{p}^T\vec{q}=0$.
\end{lemma}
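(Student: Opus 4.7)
The plan is to recognize the Bloch vector construction as nothing more than a coordinate representation with respect to the orthonormal operator basis $\{\sigma_n\}_{n=1}^{N^2}$, so that the Hilbert--Schmidt inner product on Hermitian operators pulls back to the standard Euclidean inner product on $\mathbb{R}^{N^2}$. First I would recall that by construction $\Tr(\sigma_m^\dagger \sigma_n)=\Tr(\sigma_m\sigma_n)=\delta_{mn}$, since the $\sigma_n$ are Hermitian and orthonormal with respect to $\langle A,B\rangle_{\mathrm{HS}}:=\Tr(A^\dagger B)$. Because $P$ and $Q$ are Hermitian $N\times N$ matrices, they lie in the real vector space spanned by $\{\sigma_n\}$, and the standard expansion formula gives
\begin{equation*}
P=\sum_{n=1}^{N^2} p_n\,\sigma_n, \qquad Q=\sum_{n=1}^{N^2} q_n\,\sigma_n,
\end{equation*}
with coefficients $p_n=\Tr(\sigma_n^\dagger P)=\Tr(P^\dagger\sigma_n)$ and $q_n=\Tr(Q^\dagger\sigma_n)$ by Hermiticity of $P$, $Q$, and $\sigma_n$; these are exactly the components of the Bloch vectors $\vec{p}$ and $\vec{q}$ as defined in the statement.

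Next I would substitute these expansions into $\Tr(P^\dagger Q)$, use linearity of the trace, and collapse the double sum using orthonormality:
\begin{equation*}
\Tr(P^\dagger Q)=\sum_{m,n} p_m q_n \Tr(\sigma_m\sigma_n)=\sum_{m,n} p_m q_n\,\delta_{mn}=\sum_{n=1}^{N^2} p_n q_n=\vec{p}^T\vec{q}.
\end{equation*}
The hypothesis $\Tr(P^\dagger Q)=0$ then immediately gives $\vec{p}^T\vec{q}=0$. There is no real obstacle: the only subtlety worth flagging is making sure the coefficients in the expansion genuinely coincide with the components defined by $\Tr(P^\dagger\sigma_n)$, which follows because $P^\dagger=P$ and each $\sigma_n$ is Hermitian so that $\Tr(P^\dagger\sigma_n)=\Tr(\sigma_n^\dagger P)$. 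In fact the computation establishes the stronger statement that the Bloch representation is an isometry from the Hilbert--Schmidt inner product on Hermitian operators to the Euclidean inner product on $\mathbb{R}^{N^2}$, of which the lemma is the special case of orthogonality.
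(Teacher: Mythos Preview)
Your proof is correct and follows exactly the approach indicated in the paper, which simply says to expand $P$ and $Q$ in the orthonormal basis $\{\sigma_n\}_{n=1}^{N^2}$; you have just written out the details of that expansion and the resulting collapse via $\Tr(\sigma_m\sigma_n)=\delta_{mn}$. Your observation that this shows the Bloch map is a Hilbert--Schmidt-to-Euclidean isometry is a nice way to phrase the underlying content.
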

\begin{proof}
Expand $P$ and $Q$ in terms of the basis $\{\sigma_n\}_{n=1}^{N^2}$ of the set of Hermitian $N \times N$ operators.
\end{proof}

\begin{theorem}\label{t:HVAS}
Let $\bar{N}=N$. If $[H,V]=0$ in the quantum master Eq.~\eqref{e:QME1}, then $[\A,\S]=0$ in the Bloch equation, and the kernels of $\A$ and $\S$ coincide and are both $N$-dimensional.
\end{theorem}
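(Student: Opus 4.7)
The plan is to diagonalize $H$ and $V$ simultaneously, compute the action of $-\imath[H,\cdot]$ and $\mathfrak{L}(V)$ in a common eigen-operator basis, and then transport the result to the real Hermitian Bloch space. Since $[H,V]=0$ and $\bar{N}=N$, $H$ has $N$ distinct eigenvalues and there is a common orthonormal eigenbasis $\{\ket{u_k}\}_{k=1}^N$ diagonalizing both operators. I would work with the rank-one basis $E_{k\ell}=\ket{u_k}\bra{u_\ell}$ of the full $N^2$-dimensional complex operator algebra on $\H$, together with the Hermitian regrouping $\Pi_k=E_{kk}$ and the pairs $\tfrac{1}{\sqrt{2}}(E_{k\ell}+E_{\ell k})$, $\tfrac{\imath}{\sqrt{2}}(E_{k\ell}-E_{\ell k})$ for $k<\ell$, which together span the real $N^2$-dimensional Hermitian subspace on which $\A$ and $\S$ act.

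For the commutator, the cleanest route is to note that the left- and right-multiplication superoperators $L_X:Y\mapsto XY$ and $R_X:Y\mapsto YX$ satisfy $[L_X,R_Y]=0$ for all $X,Y$, while $[L_X,L_Y]=L_{[X,Y]}$ and $[R_X,R_Y]=R_{[Y,X]}$. Hence $[H,V]=0$ makes $\{L_H,R_H,L_V,R_V\}$ a mutually commuting family. The Hamiltonian and Lindblad superoperators $-\imath[H,\cdot]=-\imath(L_H-R_H)$ and $\mathfrak{L}(V)=L_V R_V-\tfrac{1}{2}(L_V^2+R_V^2)$ are polynomials in this family and therefore commute as maps on $\operatorname{End}(\H)$. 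Both preserve Hermiticity, so they restrict to $\mathbb{R}$-linear endomorphisms of the real Hermitian subspace, and their matrix representations in any orthonormal Hermitian basis $\{\sigma_n\}$ are precisely $\A$ and $\S$. Since an orthonormal change of basis preserves commutators, $[\A,\S]=0$ follows.

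For the kernels, direct computation in the $E_{k\ell}$ basis gives $-\imath[H,E_{k\ell}]=-\imath\omega_{k\ell}E_{k\ell}$ and $\mathfrak{L}(V)E_{k\ell}=\gamma_{k\ell}E_{k\ell}$ with $\omega_{k\ell}=\lambda_k(H)-\lambda_\ell(H)$ and $\gamma_{k\ell}=-\tfrac{1}{2}(\lambda_k(V)-\lambda_\ell(V))^2$, as already used in~\eqref{e:rho_k_ell}. The assumption $\bar{N}=N$ forces $\omega_{k\ell}\neq 0$ for $k\neq\ell$, and consistency of $V$'s multiplicities with those of $H$ makes the $\lambda_k(V)$ distinct, so $\gamma_{k\ell}\neq 0$ for $k\neq\ell$. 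Both superoperators therefore annihilate \emph{the same} $N$-dimensional subspace $\operatorname{span}\{\Pi_k\}_{k=1}^N$ of Hermitian operators. The map from Hermitian operators to Bloch vectors is a real isometry (Lemma~\ref{l:orthog} being the orthogonality instance), so this subspace transports to an $N$-dimensional common kernel of $\A$ and $\S$, establishing $\ker\A=\ker\S$ of dimension $N$. The main obstacle I anticipate is simply the bookkeeping between $\operatorname{End}(\H)$, its real Hermitian subspace, and the concrete Bloch coordinates — the core computation is just eigenvalue arithmetic in the common eigenbasis.
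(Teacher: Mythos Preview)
Your argument is correct, and for the kernel statement it is essentially identical to the paper's: both identify the common eigenoperators $E_{k\ell}=\Pi_k\rho_0\Pi_\ell$, read off the eigenvalues $-\imath\omega_{k\ell}$ and $\gamma_{k\ell}$, and use $\bar N=N$ (together with the distinctness of the $\lambda_k(V)$ implied by the multiplicity-consistency requirement on $V$) to conclude that the diagonal block $\operatorname{span}\{\Pi_k\}_{k=1}^N$ is exactly the common kernel, transported isometrically to $\mathbb{R}^{N^2}$ via Lemma~\ref{l:orthog}.

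Where you genuinely diverge is in the commutator step. The paper proves $[\A,\S]=0$ by exhibiting, for every $\delta$, a $\delta$-independent unitary $\U$ built from the Bloch images of the $E_{k\ell}$ that diagonalizes $\A+\delta\S$; specializing to $\delta=0$ and $\delta\to\infty$ then shows $\U$ diagonalizes $\A$ and $\S$ separately, whence they commute. You instead observe that $-\imath\,\mathrm{ad}_H$ and $\mathfrak{L}(V)$ are polynomials in the mutually commuting family $\{L_H,R_H,L_V,R_V\}$ (using that $V$ is Hermitian, which the paper's construction of $V$ guarantees but which you should state), so they commute as $\mathbb{C}$-linear endomorphisms of $\operatorname{End}(\H)$, and hence as $\mathbb{R}$-linear maps on the Hermitian subspace; $\A$ and $\S$ are just their matrices. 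Your route is shorter and basis-free, and it separates the commutation statement from the spectral one. The paper's route, by contrast, simultaneously delivers the explicit diagonalizing $\U$ and the block structure $\diag(\bm\Omega,0)$, $\diag(\bm\Gamma,0)$ that the subsequent analysis in Sec.~\ref{s:pure_dephasing} relies on, so it does double duty.
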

\begin{proof}
See Appendix~\ref{a:DephasingHamiltonianEigenbasis}.
\end{proof}

\begin{figure*}
\null\hfill
\subfloat[$\|\bPhi(\imath\omega)^\#\|$ vs $\|\bPhi^+(\imath\omega)\|$] {\includegraphics[width=0.3\textwidth]{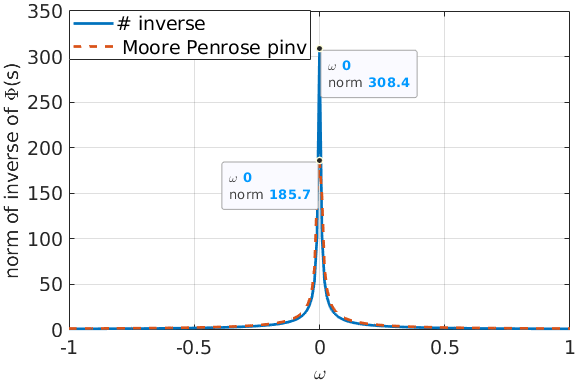}}\hfill
\subfloat[$\|T_{\vec{z},\vec{w}_u}^{u}(\imath\omega,\delta\S_k)\|$ for $\delta=0.1$] {\includegraphics[width=0.3\textwidth]{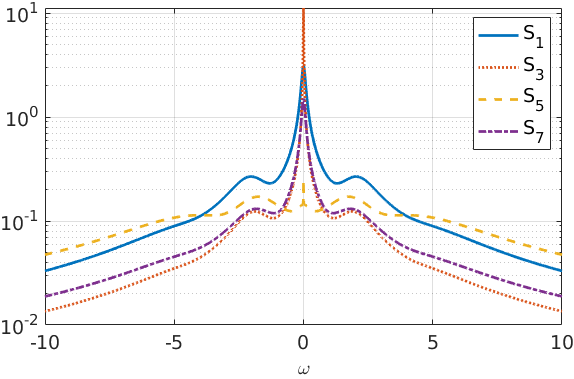}}\hfill
\subfloat[$\|T_{\vec{z},\vec{w}_u}^{u}(\imath\omega,\delta\S_k)\|$ for $\delta=1$] {\includegraphics[width=0.3\textwidth]{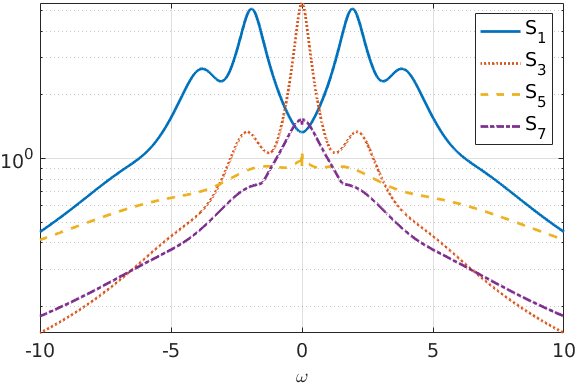}}\hfill\null
\caption{(a) Comparison of the norm of the inverse of $\bPhi(s)$ for the \#-inverse and the Moore-Penrose pseudo-inverse ($(\cdot)^+$).
(b,c) Error gains $\|T_{\vec{z},\vec{w}_u}^{u}(\imath\omega,\delta\S_k)\|$ as a function of frequency for the structured uncertainties in Eq.~\eqref{eq:S1234567} and different sizes $\delta\in\{0.1,1\}$. Due to the pairwise similarities $\S_1 \sim \S_2$, $\S_3 \sim \S_4$ and $\S_6 \sim \S_7$, the cases $\S_2$, $\S_4,$ and $\S_6$ are not plotted.}
\label{fig:2qubit_example_gain}
\end{figure*}

\subsubsection{Simultaneous diagonalization approach}

We first consider the case where $H$ does not depend on $\theta$. Recalling $[H,V]=0$, then by Th.~\ref{t:HVAS}, $\A$ and $\S$ are $N^2\times N^2$ matrices of rank $\le N^2-N$ with equality in the generic case. More specifically, by Lemma~\ref{l:PQQP}, $\A$ and $\S$ are simultaneously diagonalizable by a complex unitary matrix $\U$,
\begin{subequations}
\begin{align}
  \U^\dag \A \U & = \diag(\bm \Omega,0),\\
  \U^\dag \S \U & = \diag(\bm \Gamma,0),
\end{align}
\end{subequations}
where $\bm\Omega$ and $\bm\Gamma$ are diagonal matrices of rank $N^2-N$ in the generic case, with purely imaginary diagonal entries $\imath \omega_{k \ne \ell} = \imath(\lambda_k(H)-\lambda_\ell(H))$ for $\bm\Omega$ and purely real, negative diagonal entries $\gamma_{k \ne\ell}=-\tfrac{1}{2}(\lambda_k(V)-\lambda_\ell(V))^2$ for $\bm\Gamma$ (see~\cite{CDC_decoherence}). This allows us to rewrite Eqs.~\eqref{eq:A1} as
\begin{equation}
  \U^\dag \dot{\vec{z}} = \U^\dag (\A + \delta \S) \vec{z} + \delta \U^\dag \S \vec{w}_u(t).
  \end{equation}
Setting $\vec{\zeta} = \U^\dag \vec{z}$, $\vec{\upsilon}_u = \U^\dag \vec{w}_u$ we obtain
\begin{equation}\label{e:inlieu}
    \tfrac{d}{dt}\vec{\zeta} = \diag(\bm \Omega+\delta\bm \Gamma,0) \vec{\zeta} + \delta \diag(\bm \Gamma,0) \vec{\upsilon}_u.
\end{equation}
Note that, despite the \emph{real} form of the Bloch equations, $\vec{\zeta}$ and $\vec{\upsilon}_u$ are complex, as $\U^\dag$ is in general a complex unitary operator, although we could easily define an equivalent real form. Finally, we can partition the vectors $\vec{\zeta}$ and $\vec{\upsilon}_u$ such that
\begin{align*}
  \begin{pmatrix} \bm\Omega & 0 \\ 0 & 0 \end{pmatrix}
  \begin{pmatrix} \vec{\zeta}_1 \\ \vec{\zeta}_2 \end{pmatrix} =
  \begin{pmatrix} \bm\Omega \vec{\zeta}_1 \\ 0 \end{pmatrix}, \;
  \begin{pmatrix} \bm\Gamma & 0 \\ 0 & 0 \end{pmatrix}
  \begin{pmatrix} \vec{\upsilon}_{u,1} \\ \vec{\upsilon}_{u,2} \end{pmatrix}=
  \begin{pmatrix} \bm\Gamma \vec{\upsilon}_{u,1} \\ 0 \end{pmatrix}.
\end{align*}
We clearly have $\dot{\vec{\zeta}}_2=0$, i.e., $\vec{\zeta}_2(t)$ is constant. Therefore, the dynamics of the system are completely determined by $\vec{\zeta}_1(0)$ and the reduced model Bloch equation
\begin{equation}
  \dot{\vec{\zeta}}_1 = (\bm\Omega + \delta\bm\Gamma) \vec{\zeta}_1 + \delta\bm\Gamma \vec{\upsilon}_{u,1},
\end{equation}
a particular manifestation of Eq.~\eqref{eq:A1}. Generally, $\bm \Omega$ and $\bm \Omega+\delta\bm \Gamma$ are invertible, and taking the Laplace transform yields
\begin{equation}
  \widehat{\vec{\zeta}}_1(s) = (s \I - \bm\Omega - \delta \bm \Gamma)^{-1} \delta \bm \Gamma \vec{\hat{\upsilon}}_{u,1}(s).
\end{equation}
Therefore, $\vec{\hat{\zeta}}_1 = \T_{\vec{\zeta}_1, \vec{\upsilon}_{u,1}}^u(s,\delta) \vec{\hat{\upsilon}}_{u,1}$, and the transfer matrix from the disturbance input $\vec{\hat{\upsilon}}_{u,1}(s)$ to the error state $\vec{\hat{\zeta}}_1(s)$ is
\begin{equation}\label{eq:Tzw}
  \T_{\vec{\zeta}_1,\vec{{\upsilon}}_{u,1}}^u(s,\delta)
  = (s \I - \bm \Omega-\delta \bm \Gamma)^{-1} \delta \bm\Gamma.
\end{equation}
Taking $\bm\Omega = \diag(\imath\omega_{k\ne\ell})$ and $\bm\Gamma = \diag(\gamma_{k\ne\ell})<0$ we obtain
\begin{align*}
  \T_{\vec{\zeta}_1,\vec{\upsilon}_{u,1}}^u(\imath\omega,\delta)
  &= \diag( (\imath \omega - \imath \omega_{k\ell} - \delta \gamma_{k\ell} )^{-1}) \diag(\delta\gamma_{k\ell}) \\
  &= \diag\mathopen{}\mathclose{\left( \frac{\delta \gamma_{k\ell}}{\imath \omega - \imath \omega_{k\ell} - \delta \gamma_{k\ell}}\right)}.
\end{align*}
Taking the norm to be the largest singular value yields
\begin{equation}\label{eq:bound}
  \norm{\T_{\vec{\zeta}_1,\vec{\upsilon}_{u,1}}^u(\imath\omega,\delta)}_\infty
  = \max_{\omega,(k,\ell)} \left| \frac{\delta \gamma_{k\ell}}{\imath (\omega - \omega_{k\ell}) - \delta \gamma_{k\ell}} \right| =1,
\end{equation}
where the bound is obtained for $\omega = \omega_{k,\ell}$, i.e., if $\omega$ is an eigenfrequency of the system, for all $\delta$, including $\delta\to 0$.

\subsubsection{Simultaneous diagonalization under errors in the Hamiltonian}

Let $\omega_{k\ell}(\theta)=\lambda_k(H_\theta)-\lambda_\ell(H_\theta)$ and $\gamma_{k\ell}(\theta)=-\tfrac{1}{2}(\lambda_k(V_\theta)-\lambda_\ell(V_\theta))^2$. Assume the $k \ne \ell$ eigenvalues $-(\imath\omega_{k\ell}(\theta) +\delta \gamma_{k\ell}(\theta))$ of $H_\theta+\delta S_\theta$ have constant multiplicities, and let $\A_\theta$ and $\S_\theta$ be the Bloch representations of $H_\theta$ and $\mathfrak{L}(V_\theta)$, resp. By the proof of Th.~\ref{t:HVAS}, the corresponding eigenvalues of $\A_\theta+\delta \S_\theta$ do not cross, so we can simultaneously diagonalize $\A_\theta$ and $\S_\theta$. Hence, as $\U_\theta$ is unitary and, under the no-crossing hypothesis, depends continuously on $\theta$, we get, for all $\theta$,
\begin{equation}
  \U_\theta^\dag (\A_\theta +\delta \S_\theta) \U_\theta = \diag (\bm\Omega_\theta + \delta \bm\Gamma_\theta,0),
\end{equation}
where $\bm\Omega_\theta$, $\bm\Gamma_\theta$ display the perturbed eigenfrequencies, dampings, resp., on their diagonals. Proceeding as before, we set
\begin{equation}
  \vec{\zeta}(\theta) = \U_\theta^\dag \vec{z}(\theta), \quad \vec{\upsilon}(\theta) = \U^\dag_\theta \vec{w}(\theta),
\end{equation}
to obtain, in the unperturbed case of Eq.~\eqref{eq:A1},
\begin{align}\label{e:inlieutheta}
  {\tfrac{d}{dt}\vec{\zeta}(\theta)}
  &= \U^\dag_\theta (\A_\theta+\delta \S_\theta) \U_\theta \U_\theta^\dag \vec{z}(\theta) + \delta \U_\theta^\dag \S_\theta \U_\theta \U_\theta^\dag \vec{w}_u(\theta)\nonumber\\
  & = \diag(\bm\Omega_\theta + \delta \bm\Gamma_\theta,0) \vec{\zeta}(\theta) + \diag(\delta \bm\Gamma_\theta,0) \vec{\upsilon}_u(\theta),
\end{align}
under the assumption that the $\theta$-variation is slower than the dynamics as it is common practice in robust control. Eq.~\eqref{e:inlieutheta} is now used in lieu of Eq.~\eqref{e:inlieu}. If eigenvalues cross, under analyticity conditions, we can still proceed with block-diagonalization invoking the generalization~\cite{Dolezal2} of Dole\v{z}al's theorem~\cite{Dolezal1} (see Appendix~\ref{a:Dolezal}).

\begin{figure*} \centering
\subfloat[Maximum gain $\|\T_{\vec{z},\vec{w}_u}^{u}(\imath \omega,\delta \S) \|_\infty$]{\includegraphics[width=0.45\textwidth]{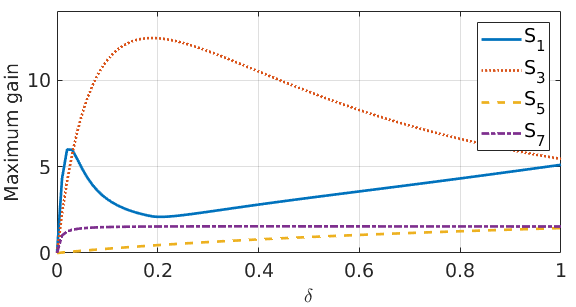}\qquad}
\subfloat[Frequency where maximum gain $\|\T_{\vec{z},\vec{w}_u}^{u}(\imath \omega,\delta \S)\|$ is achieved]{\includegraphics[width=0.45\textwidth]{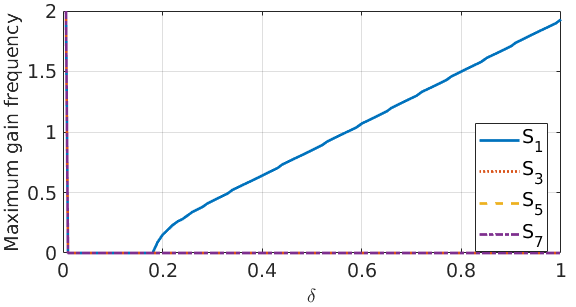}}
\caption{The maximum gain for the structured uncertainties in Eq.~\eqref{eq:S1234567} suggests that for small $\delta$ ($\delta<0.1$) the system is most sensitive to perturbations of type $\S_1$ while for larger $\delta$ sensitivity to $\S_3$ dominates.} \label{fig:2qubit_example_gain2}
\end{figure*}

\subsubsection{Robust performance approach}

Finally, we show that the above method is consistent with, and a simplification of, the robust performance approach of Sec.~\ref{sec:modelling} culminating in Th.~\ref{th:robust_performance},
but here restricted to $s\ne 0$. Setting $\bPhi(s) = s\I -\bm\Omega$, Eq.~\eqref{e:Gzwu} becomes
\begin{equation*}
  G_{\vec{\zeta}_1,\vec{\upsilon}_{u,1}} = \begin{pmatrix}
      \bPhi^{-1}(s) \bm\Gamma & \bPhi^{-1}(s) \bm\Gamma \\ \I & 0
    \end{pmatrix}, \quad {\bm \Delta} = \diag(\delta \I, \Delta_f).
\end{equation*}
It follows that
\begin{equation*}
  \det(\I + G_{\vec{\zeta}_1,\vec{\upsilon}_{u,1}} \bm \Delta)
  = \det [ \I + \delta \bPhi^{-1}(s) \bm \Gamma - \delta \bPhi^{-1}(s) \bm \Gamma \Delta_f].
\end{equation*}
Nominally from Th.~\ref{th:robust_performance}, $\Delta_f$ should be complex and fully populated to capture $\|\T^u_{\vec{\zeta}_1,\vec{\upsilon}_{u,1}}\|$. However, in the open quantum system situation, the following holds:
\begin{lemma}
If the fictitious feedback $\Delta_f$ is diagonal, $\diag\{\Delta_{f_{ii}}\}$, the optimal $\Delta_f$ yields $\|\T^u_{\vec{\zeta}_1,\vec{\upsilon}_{u,1}}\|=1/\|\Delta_f\|$.
\end{lemma}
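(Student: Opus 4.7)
The plan is to exploit the fact that after the simultaneous diagonalization of $\A$ and $\S$, every matrix appearing in the structured singular value condition is already diagonal, so the problem reduces from a matrix optimization to a coordinatewise scalar one.

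First I would observe that since $\bm\Omega$ and $\bm\Gamma$ are diagonal (as established by the simultaneous diagonalization in Lemma~\ref{l:PQQP} and the construction preceding Eq.~\eqref{eq:Tzw}), the resolvent $\bPhi^{-1}(s)=(s\I-\bm\Omega)^{-1}$ is diagonal, and hence
\begin{equation*}
  \T^u_{\vec{\zeta}_1,\vec{\upsilon}_{u,1}}(s,\delta) = (s\I-\bm\Omega-\delta\bm\Gamma)^{-1}\delta\bm\Gamma = \diag(T_{ii}(s,\delta))
\end{equation*}
is diagonal, with entries $T_{ii}(s,\delta)=\delta\gamma_{ii}/(s-\imath\omega_{ii}-\delta\gamma_{ii})$. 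Because the spectral norm of a diagonal matrix equals the maximum modulus of its diagonal entries, we have $\|\T^u_{\vec{\zeta}_1,\vec{\upsilon}_{u,1}}\| = \max_i |T_{ii}|$, and similarly $\|\Delta_f\|=\max_i |\Delta_{f_{ii}}|$ for a diagonal $\Delta_f$.

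Next I would rewrite the singularity condition. With both factors diagonal, $\I+\T^u_{\vec{\zeta}_1,\vec{\upsilon}_{u,1}}\Delta_f$ is diagonal, so
\begin{equation*}
  \det\mathopen{}\mathclose{\bigl(\I+\T^u_{\vec{\zeta}_1,\vec{\upsilon}_{u,1}}\Delta_f\bigr)} = \prod_i \bigl(1+T_{ii}\Delta_{f_{ii}}\bigr),
\end{equation*}
which vanishes if and only if there exists an index $i^\star$ with $\Delta_{f_{i^\star i^\star}}=-1/T_{i^\star i^\star}$. The minimization $\min\{\|\Delta_f\|:\det(\I+\T^u\Delta_f)=0\}$ is then reduced, with no loss, to choosing a single nonzero diagonal entry (all others vanish, since zeroing them can only decrease $\|\Delta_f\|$), giving $\|\Delta_f\| = 1/|T_{i^\star i^\star}|$. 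Minimizing this over $i^\star$ selects the index that maximizes $|T_{ii}|$, producing
\begin{equation*}
  \min_{\Delta_f\text{ diag.}}\|\Delta_f\| = \frac{1}{\max_i |T_{ii}|} = \frac{1}{\|\T^u_{\vec{\zeta}_1,\vec{\upsilon}_{u,1}}\|},
\end{equation*}
which rearranges to the stated identity.

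The only subtle step is the reduction to a single nonzero entry; since $\|\Delta_f\|$ is the maximum of $|\Delta_{f_{ii}}|$, any feasible $\Delta_f$ can be truncated to the index $i^\star$ realizing $1+T_{i^\star i^\star}\Delta_{f_{i^\star i^\star}}=0$ without increasing its norm, so no optimality is lost. All other steps are straightforward consequences of the diagonal structure inherited from $[H,V]=0$ via Th.~\ref{t:HVAS}.
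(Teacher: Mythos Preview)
Your proof is correct and follows essentially the same approach as the paper: both exploit that $\T^u_{\vec{\zeta}_1,\vec{\upsilon}_{u,1}}$ and $\Delta_f$ are diagonal, so the determinant condition factors and the optimization reduces to a scalar problem over the diagonal indices. The only cosmetic difference is that the paper argues by contradiction (showing the maximal entry of the optimal $\Delta^*$ must kill a factor), whereas you argue constructively by truncating any feasible $\Delta_f$ to a single nonzero entry; the content is the same.
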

\begin{proof}
We prove that a diagonal $\Delta_f$ captures $\|\T^u_{\vec{\zeta}_1, \vec{\upsilon}_{u,1}}\|$. To simplify the notation, the super/subscripts of $\T^u_{\vec{\zeta}_1, \vec{\upsilon}_{u,1}}$ are dropped and the subscript $f$ of $\Delta_f$ is dropped. Consider
\begin{equation}\label{e:optimization}
  1/\min_{\Delta}\{\|\diag\{\Delta_{{ii}}\}\|: \det(I+\diag\{T_{ii}\}\diag\{\Delta_{{ii}}\})=0\}.
\end{equation}
Assume $\min_{\Delta}$ is achieved for $\Delta^*$ and that $\|\diag\{\Delta_{{ii}}^* \}\|$ is achieved for $|\Delta^*_{{i_*i_*}}|$, where $i_*i_*$ might contain many indices.  (It is always possible to take $|\Delta^*_{i_*i_*}|$ strictly dominating all other $|\Delta^*_{ii}|$ unless all $|T_{ii}|$'s are equal in which case the lemma is trivial.) It is claimed that $1+T_{i_*i_*}{\Delta}_{{i_*i_*}}^*=0$. It suffices to show that $1+T_{jj}{\Delta}_{jj}^* \ne 0$, $\forall j \not\in i_*$. Assume by contradiction that $1+T_{jj}{\Delta}_{jj}^*=0$ for some $j \not\in i_*$. Then
\[
  \det(\I+\diag\{T_{ii}\}\diag\{\Delta_{jj}^*\ldots \Delta_{jj}^*\})=0
\]
while $\|\diag\{{\Delta}_{jj}^*\ldots {\Delta}_{jj}^*\}\| =|{\Delta}_{jj}^*|<|{\Delta}_{i_*i_*}^*|$, which is a contradiction to $\|\Delta^*\|$ achieved for $|\Delta^*_{i_*i_*}|$. From $1+T_{i_*i_*}\Delta_{i_*i_*}^*=0$ it follows that $\|\T\|=|T_{i_*i_*}|=1/|\Delta^*_{i_*i_*}|$, which equals~\eqref{e:optimization}.
\end{proof}

With this diagonal structure, $\det(\I+G \bm \Delta)$ vanishes if
\begin{equation*}
  1 + \delta \frac{\gamma_{k\ell}}{s-\imath\omega_{k\ell}} - \delta \frac{\gamma_{k\ell}}{s-\imath \omega_{k\ell}} (\Delta_f)_{k\ell,k\ell} = 0,
\end{equation*}
for some $k\ell$ indexing the diagonal. The above, solved for $(\Delta_f)_{k\ell,k\ell}$, gives
\begin{equation*}
  (\Delta_f)_{k\ell,k\ell} = \frac{s-\imath\omega_{k\ell}}{\delta \gamma_{k\ell}} + 1.
\end{equation*}
This assumes its minimum of $1$ for $s=\imath\omega_{k\ell}$ for all $\delta >0$. Choosing $\delta \leq 1$ yields $\min\norm{\bm \Delta}=1$. Thus, $\mu_\mathcal{D}(G(\imath \omega \ne 0)) = 1$, which is consistent with
Eq.~\eqref{eq:bound}.

\section{General Dissipative Dynamics}\label{sec:general_dissipative}

We apply and extend the formalism of Sec.~\ref{sec:modelling} to the general case of dissipative systems. Contrary to Sec.~\ref{sec:pure_dephasing}, the decoherence no longer acts in the Hamiltonian basis, that is, $[H,V] \ne 0$, and the uncertainty manifests itself as decoherence and/or uncertainties in the parameters of the Hamiltonian.
The corresponding Bloch operator $\A$ comprises \emph{both} the nominal Hamiltonian and the nominal decoherence dynamics, and all uncertainties are relegated to $\delta \S$. One obstacle to applying robust performance results to this case is the rank-deficiency of the $N^2\times N^2$ Bloch matrix $\A+\delta \S$, which generically has rank $N^2-1$. One of the main objectives of this section is to address this difficulty.

Despite its inconvenience, the rank deficiency of $\A+\delta \S$ can be exploited. It is common in physics to define a reduced $(N^2-1)\times (N^2-1)$ Bloch matrix $\overline{\A} +\delta \overline{\S}$ of full rank. This leads to an inhomogeneous Bloch equation for the reduced Bloch vector $\vec{s}$, where the trace component of $\vec{r}$ in Eq.~\eqref{eq:Bloch1} has been removed:
\begin{equation}
  \tfrac{d}{dt} \vec{s}(t) = (\overline{\A} +\delta \overline{\S}) \vec{s}(t) + \vec{c}.
\end{equation}
This equation is useful in some regards. If $\overline{\A}+\delta \overline{\S}$ is invertible (generic case) then the system has a unique steady-state $\vec{s}_{\rm ss} = -(\overline{\A} +\delta \overline{\S})^{-1} \vec{c}$, which can be shown to be \emph{globally} asymptotically stable~\cite{PhysRevA.81.062306}. Therefore, the steady-state is independent of the initial state and robust to initial state preparation errors. However, the control is sensitive to uncertainty in the Hamiltonian and dissipative processes.

\begin{figure*}\centering
\subfloat[Frequency sweep] {\includegraphics[width=.33\textwidth]{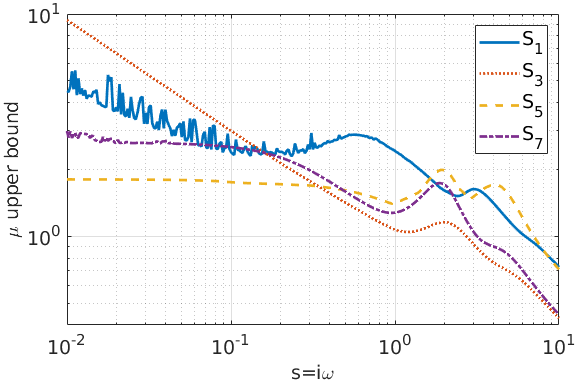}
\label{fig:S1357_upper_omega_details}}
\subfloat[$\Re(s)$ sweep]{\includegraphics[width=.33\textwidth]{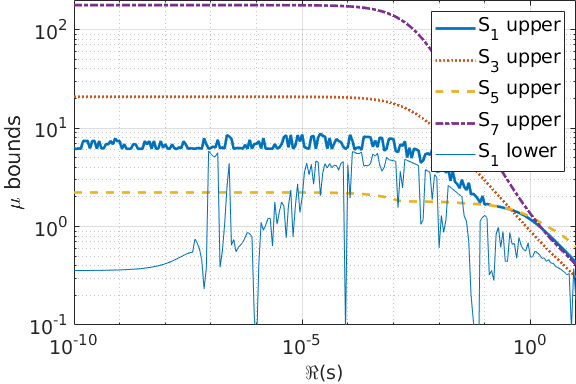}
\label{fig:S1357}}
\subfloat[$\Re(s)$ sweep for initial state error] {\includegraphics[width=0.33\textwidth]{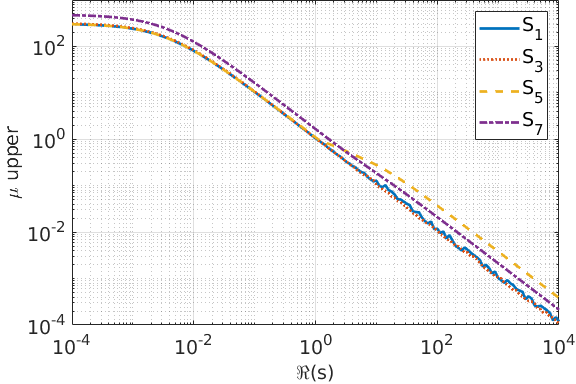}
\label{fig:S1357x0}}
\caption{(a) Upper bounds on the $\mu_\mathcal{D}$ bounding the error transmission
$\T_{\vec{z},\vec{w}_u}^u(\imath \omega, \delta \S_k)$ for the structured uncertainties in Eq.~\eqref{eq:S1234567} under frequency sweep $s=\imath\omega$. (b) upper and lower bounds for $s \downarrow 0$ along the real axis. Upper and lower bounds in (b) coincide for $\S_3$, $\S_5$, and $\S_7$. $\S_1$ displays aberrant behavior under frequency and real axis sweep. (c) Upper bounds on ${\mu}_\mathcal{D}^{(\#)}$ of initial state error transmission $\bar{T}_{\vec{z},\vec{z}_0}^u(s,\delta \S_k)$ for the structured uncertainties in Eq.~\eqref{eq:S1234567} for $s\downarrow0$ along the real axis.\label{fig:S1357_all}}
\end{figure*}

\subsection{Dynamic Disturbance Transmission}\label{sec:pseudo_inversion}

Consider Eq.~\eqref{eq:sys1} in the generalized uncertainty structure, but with no initial state preparation error. The singularity of $\bm \bPhi(s):=sI-\A$ at $s=0$ raised in Sec.~\ref{sec:modelling} leads to the question of solvability of the equation
\begin{equation}\label{eq:solution}
  (\bPhi(s)-\delta \S)\vec{\hat{z}}(s) = \delta \S\vec{\hat{w}_u}(s)
\end{equation}
for $\vec{\hat{z}}(s)$ when invertibility of $\bPhi(s)-\delta \S$ is not guaranteed. From the Bloch representation we know that the state vector is partitioned as $\vec{r} = [\vec{r}_1^T,c]^T$ where $\vec{r}_1$ is a column vector of length $N^2-1$ and $c$ is a constant, the value of which depends on the choice of basis, $c=\Tr[\bm\sigma_{N^2} \rho]$. Since both $\vec{r}_u$ and $\vec{r}_p$ have this structure, it follows that $\vec{z}=[\vec{z}_1^T;0]^T$, where $\vec{z}_1$ is a column vector of length $N^2-1$. Similarly, the Bloch matrix $\A$ and its structured perturbation $\S$ are of the form
\begin{equation} \label{eq:AS}
  \A = \begin{bmatrix} \A_{11} & \A_{12} \\ 0 & 0 \end{bmatrix}, \quad
  \S = \begin{bmatrix} \S_{11} & \S_{12} \\ 0 & 0 \end{bmatrix},
\end{equation}
where $\A_{11}$ and $\S_{11}$ are square matrices of size $N^2-1$, and
\begin{equation*}
  \bPhi(s) = s \I -\A = \begin{bmatrix} \bPhi_{11}(s) & \bPhi_{12} \\ 0 & s \end{bmatrix},
\end{equation*}
with $\bPhi_{11}(s) = s \I'-\A_{11}$, $\bPhi_{12} = -\A_{12}$, and $\I$ and $\I'$ the identity matrix in dimension $N^2$ and $N^2-1$, respectively. Thus Eq.~\eqref{eq:solution} can be written as
\begin{equation*}
  \begin{bmatrix}
  \bPhi_{11}(s)-\delta \S_{11} & \bPhi_{12}-\delta \S_{12} \\ 0 & s
  \end{bmatrix}
  \begin{bmatrix}
  \vec{\hat{z}}_1 \\ 0
  \end{bmatrix}
  = \delta \begin{bmatrix} \S_{11} & \S_{12} \\ 0 & 0 \end{bmatrix}
    \begin{bmatrix} \vec{\hat{w}}_1 \\ \hat{c} \end{bmatrix},
\end{equation*}
where $\hat{c}=c/s$. The above yields the equation for $\vec{\hat{z}}_1$,
\begin{equation} \label{eq:z1}
  (\bPhi_{11}(s)-\delta \S_{11}) \vec{\hat{z}}_1 = \delta (\S_{11} \vec{\hat{w}}_1 + \hat{c} \S_{12}).
\end{equation}
This shows that if $\bPhi_{11}(s)-\delta \S_{11}$ is invertible then there is a unique solution
$\vec{\hat{z}}_1 = (\bPhi_{11}(s)-\delta \S_{11})^{-1} \delta (\S_{11} \vec{\hat{w}}_1 + \hat{c} \S_{12})$, which converges to an asymptotically stable steady-state. If $\bPhi_{11}(s)-\delta \S_{11}$ is not invertible then there is a manifold of steady-states and stability is not guaranteed.

\begin{definition}
For $\A$ and $\S$ of the form of Eq.~\eqref{eq:AS}
with $\bPhi_{11}(s)-\delta \S_{11}$ invertible, we define the {\it \#-inverse} as
\begin{equation*}
  (\bPhi(s)-\delta \S)^\#
  = \begin{bmatrix}
  (\bPhi_{11}(s)-\delta \S_{11})^{-1} & 0 \\ 0 & 0 \end{bmatrix}.
\end{equation*}
\end{definition}
We can verify that this definition ensures that $\vec{\hat{z}}$ defined as
\begin{align}\label{eq:min_norm}
  \vec{\hat{z}} :=& (\bPhi(s)-\delta \S)^\# \delta \S \vec{\hat{w}}_u(s) \nonumber\\
  =& \begin{bmatrix}
    (\bPhi_{11}(s)-\delta \S_{11})^{-1} \delta \S_{11} \!\!\! & (\bPhi_{11}(s)-\delta \S_{11})^{-1} \delta \S_{12} \\ 0 & 0
  \end{bmatrix}
  \begin{bmatrix} \vec{\hat{w}}_1 \\ \hat{c} \end{bmatrix} \nonumber\\
  =& \begin{bmatrix}
    (\bPhi_{11}(s)-\delta \S_{11})^{-1} \delta (\S_{11} \vec{\hat{w}}_1 +  \S_{12}\hat{c}) \\ 0
  \end{bmatrix}
\end{align}
is of the correct form $[\vec{\hat{z}}_1^T,0]^T$ and $\vec{\hat{z}}_1$ satisfies Eq.~\eqref{eq:z1}. With this concept, in lieu of Eq.~\eqref{e:scaling1}, we \emph{define}
\begin{equation}\label{eq:poundversion}
\T_{\vec{z},\vec{w}_u}^{u,\#}(s,\delta)
 =(\bPhi(s)-\delta \S)^\#\delta \S.
\end{equation}
\begin{lemma} \label{l:ma}
We have, in lieu of Eq.~\eqref{eq:MIL},
\begin{equation*}
  (\bPhi(s)-\delta \S)^\#\delta \S
  = (I-\bPhi(s)^\#\delta \S)^{\#}\bPhi(s)^\#\delta \S,
\end{equation*}
if $\bPhi_{11}(s)$ is invertible.
\end{lemma}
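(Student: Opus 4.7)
The plan is to unpack both sides using the block decomposition of $\bPhi(s)=s\I-\A$ and $\S$ forced by Eq.~\eqref{eq:AS}, and reduce the statement to the ordinary matrix inversion identity applied to the full-rank $(N^2-1)\times(N^2-1)$ block $\bPhi_{11}(s)$. Throughout, the zero last row shared by $\A$ and $\S$ is what makes all the block computations collapse cleanly.

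For the left-hand side, from $\delta\S=\delta\bigl[\begin{smallmatrix}\S_{11}&\S_{12}\\0&0\end{smallmatrix}\bigr]$ and the defining formula for $(\bPhi(s)-\delta\S)^{\#}$, a direct multiplication gives a block matrix whose only nonzero entries are $(\bPhi_{11}(s)-\delta\S_{11})^{-1}\delta\S_{11}$ in the $(1,1)$ position and $(\bPhi_{11}(s)-\delta\S_{11})^{-1}\delta\S_{12}$ in the $(1,2)$ position, with the second block-row vanishing.

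For the right-hand side, I would first observe that $\bPhi(s)^{\#}\delta\S$ inherits the same block structure as $\delta\S$ itself (zero last row, and zero bottom-right entry). Hence $\I-\bPhi(s)^{\#}\delta\S$ is block upper triangular with top-left block $\I'-\bPhi_{11}(s)^{-1}\delta\S_{11}$ and scalar bottom-right entry equal to $1$. This is not literally of the form of Eq.~\eqref{eq:AS}, but the $\#$-inverse recipe applies unchanged to any block upper-triangular matrix with invertible top-left block; invertibility of $\I'-\bPhi_{11}(s)^{-1}\delta\S_{11}$ is equivalent to invertibility of $\bPhi_{11}(s)-\delta\S_{11}$ thanks to the hypothesis that $\bPhi_{11}(s)$ is invertible. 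Multiplying the resulting $(\I-\bPhi(s)^{\#}\delta\S)^{\#}$ by $\bPhi(s)^{\#}\delta\S$ then produces a matrix whose only nonzero entries sit in the same $(1,1)$ and $(1,2)$ positions, equal respectively to $(\I'-\bPhi_{11}^{-1}\delta\S_{11})^{-1}\bPhi_{11}^{-1}\delta\S_{11}$ and $(\I'-\bPhi_{11}^{-1}\delta\S_{11})^{-1}\bPhi_{11}^{-1}\delta\S_{12}$.

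To finish, I would invoke the elementary factorization $\bPhi_{11}(s)-\delta\S_{11}=\bPhi_{11}(s)(\I'-\bPhi_{11}(s)^{-1}\delta\S_{11})$, valid because $\bPhi_{11}(s)$ is invertible, which on inversion yields $(\bPhi_{11}-\delta\S_{11})^{-1}=(\I'-\bPhi_{11}^{-1}\delta\S_{11})^{-1}\bPhi_{11}^{-1}$; this identifies the $(1,1)$ and $(1,2)$ blocks of the LHS and RHS and finishes the proof. The main obstacle is definitional rather than algebraic: one must verify that the $\#$-inverse recipe, originally formulated for matrices of the precise form Eq.~\eqref{eq:AS} with zero scalar in the bottom-right, extends consistently to block upper-triangular matrices with a nonzero scalar there, since the proof applies it to $\I-\bPhi(s)^{\#}\delta\S$ whose bottom-right entry is $1$. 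Once that extension is made explicit, everything else is a short block computation.
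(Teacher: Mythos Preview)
Your proposal is correct and follows essentially the same route as the paper: compute the LHS by direct block multiplication, compute $\bPhi^{\#}\delta\S$ and $\I-\bPhi^{\#}\delta\S$, apply the \#-inverse to the latter, multiply out, and identify the blocks via $(\I'-\bPhi_{11}^{-1}\delta\S_{11})^{-1}\bPhi_{11}^{-1}=(\bPhi_{11}-\delta\S_{11})^{-1}$. The definitional point you flag---that $\I-\bPhi^{\#}\delta\S$ has bottom-right entry $1$ rather than $0$ and so is not literally of the form covered by the original definition---is a genuine observation; the paper simply applies the \#-recipe to that matrix without comment, so your making the extension explicit is a small clarification rather than a departure.
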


\begin{proof} This can be verified by direct computation. The LHS of the equation gives
\begin{multline*}
(\bPhi-\delta \S)^\# \delta \S
= \begin{bmatrix}
  (\bPhi_{11}-\delta \S_{11})^{-1} & 0 \\ 0 & 0
  \end{bmatrix}
  \begin{bmatrix}
  \delta \S_{11} & \delta \S_{12} \\ 0 & 0
  \end{bmatrix} \\
= \begin{bmatrix}
  (\bPhi_{11} -\delta \S_{11})^{-1} \delta \S_{11} & (\bPhi_{11}-\delta \S_{11})^{-1}\delta \S_{12} \\ 0 & 0
  \end{bmatrix}.
\end{multline*}
For the RHS we note that
\begin{equation*}
\bPhi^\# \delta \S
= \begin{bmatrix}
  \bPhi_{11}^{-1} & 0 \\ 0 & 0
  \end{bmatrix}
  \begin{bmatrix}
  \delta \S_{11} \!\!& \delta \S_{12} \\ 0 & 0
  \end{bmatrix}
= \begin{bmatrix}
  \bPhi_{11}^{-1} \delta \S_{11} \!\! & \bPhi_{11}^{-1} \delta \S_{12} \\ 0 & 0
  \end{bmatrix}.
\end{equation*}
Thus, we have
\begin{equation*}
\I - \bPhi^\# \delta \S
= \begin{bmatrix}
  \I'-\bPhi_{11}^{-1} \delta \S_{11} & -\bPhi_{11}^{-1} \delta \S_{12} \\ 0 & 1
  \end{bmatrix}.
\end{equation*}
Furthermore, $(\I'-\bPhi_{11}^{-1} \delta \S_{11})^{-1} \bPhi_{11}^{-1} = (\bPhi_{11}-\delta\S_{11})^{-1}$ as $B^{-1} A^{-1}=(AB)^{-1}$, and thus
\begin{align*}
  &(\I-\bPhi^\# \delta \S)^{\#} \bPhi^{\#} \delta \S\\
  &=\begin{bmatrix}
      (\I'-\bPhi_{11}^{-1} \delta \S_{11})^{-1} & 0 \\ 0 & 0
    \end{bmatrix}
    \begin{bmatrix}
     \bPhi_{11}^{-1} \delta \S_{11} & \bPhi_{11}^{-1} \delta \S_{12} \\
     0 & 0
    \end{bmatrix} \\
&= \begin{bmatrix}
     (\bPhi_{11}-\delta \S_{11})^{-1} \delta \S_{11} &
     (\bPhi_{11}-\delta \S_{11})^{-1} \delta \S_{12} \\
     0 & 0
   \end{bmatrix}
\end{align*}
which agrees with the LHS.
\end{proof}

With the \#-inverse concept, we can now rewrite Eq.~\eqref{e:Gzwu} as
\begin{equation}
  G_{\vec{z},\vec{w}_u}(s) = \begin{pmatrix}\bPhi(s)^{\#}\S & \bPhi(s)^{\#}\S\\ \I & 0 \end{pmatrix}.
\end{equation}
Along with the feedback $\vec{\hat{\eta}}=(\delta I) \vec{\hat{v}}$ this reproduces the \#-inverse version~\eqref{eq:poundversion} of $\T_{\vec{z},\vec{w}_u}^u(s,\delta)$, which coincides with its classical inverse version of Eqs.~\eqref{e:scaling1}-\eqref{eq:MIL}.

\begin{theorem}\label{t:continuity}
If $\bPhi_{11}^{-1}(s)$ exists at $s=0$, $G_{\vec{z},\vec{w}_u}(s)$ is continuous, including at $s=0$. Moreover,
$\mu_\mathcal{D}(G_{\vec{z},\vec{w}_u}(s))$ is continuous, including at $s=0$, provided $0$ is not a critical value of the $s$-parameterized mapping $f_s : \bm\Delta \mapsto \det(I+G_{\vec{z},\vec{w}_u}(s)\bm\Delta)$.
\end{theorem}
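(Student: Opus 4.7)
The plan has two parts, matching the two claims of the theorem. First, for continuity of $G_{\vec{z},\vec{w}_u}(s)$ at $s=0$, I would note that by the \#-inverse definition $\bPhi(s)^\#=\diag(\bPhi_{11}(s)^{-1},0)$, and since $\bPhi_{11}(s)=s\I'-\A_{11}$ is affine in $s$ with $\bPhi_{11}(0)^{-1}$ existing by hypothesis, matrix inversion being continuous on $GL_{N^2-1}(\mathbb{C})$ yields continuity of $\bPhi_{11}(s)^{-1}$, hence of $\bPhi(s)^\#$, hence of all four blocks of $G_{\vec{z},\vec{w}_u}(s)$, on a neighborhood of $s=0$. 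This part is essentially automatic once the \#-inverse is in hand.

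The substantive claim is continuity of $\mu_\mathcal{D}(G_{\vec{z},\vec{w}_u}(s))$ at $s=0$. I would work with the reciprocal $m(s):=1/\mu_\mathcal{D}(G_{\vec{z},\vec{w}_u}(s))=\inf\{\|\bm\Delta\|:\bm\Delta\in\mathcal{D},\,f_s(\bm\Delta)=0\}$ and establish lower and upper semi-continuity separately. Lower semi-continuity at $s=0$ is a compactness argument: for any sequence $s_n\to 0$ with corresponding minimizers $\bm\Delta_n$, either $\{\bm\Delta_n\}$ is unbounded (in which case $m(s_n)\to\infty$ and the bound is trivial) or a subsequence converges to some $\bm\Delta^\infty\in\mathcal{D}$, and joint continuity of $(s,\bm\Delta)\mapsto f_s(\bm\Delta)$, which uses the first part of the theorem, gives $f_0(\bm\Delta^\infty)=0$, so $m(0)\le\|\bm\Delta^\infty\|=\lim m(s_n)$.

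Upper semi-continuity is where the regular-value hypothesis is essential. Let $\bm\Delta^*$ realize $m(0)$. The assumption that $0$ is not a critical value of $f_0$ is precisely that $df_0|_{\bm\Delta^*}$, regarded as a linear functional on the tangent space $T_{\bm\Delta^*}\mathcal{D}$, is nonzero. I would then apply the submersion form of the implicit function theorem to the joint map $F(\bm\Delta,s):=f_s(\bm\Delta)$, which is continuously differentiable in both arguments because $G(s)$ is continuous by the first part and $f_s$ is polynomial in the entries of $\bm\Delta$. This produces a continuous curve $s\mapsto\bm\Delta(s)\in\mathcal{D}$ with $\bm\Delta(0)=\bm\Delta^*$ and $f_s(\bm\Delta(s))=0$ for $s$ near $0$, whence $m(s)\le\|\bm\Delta(s)\|\to\|\bm\Delta^*\|=m(0)$.

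The main obstacle I anticipate is ensuring the implicit-function step respects the block-diagonal structure of $\mathcal{D}$: the $\delta\I$ block forces a \emph{real scalar} perturbation while $\Delta_f$ is arbitrary complex, so one must verify that $df_0|_{\bm\Delta^*}$ remains surjective after restriction to the admissible tangent directions carved out by this structure. The critical-value formulation on $\mathcal{D}$ (rather than on the ambient matrix space) is what guarantees this, and it is exactly the hypothesis needed to rule out the classical pathological discontinuities of structured singular values that occur when the determinant constraint degenerates on the boundary of the admissible perturbation set. Continuity of $m$ then transfers to $\mu_\mathcal{D}=1/m$ at any point where $m>0$, with the convention $\mu_\mathcal{D}=\infty$ when $m=0$.
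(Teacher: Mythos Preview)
Your argument is correct but organized differently from the paper's. For the second claim the paper invokes the implicit function theorem \emph{globally}: since $0$ is a regular value of $f_s$, each level set $X_s=f_s^{-1}(0)\subset\mathcal{D}$ is a smooth manifold, and as $s$ varies these manifolds deform by an isotopy, so the minimum of $\|\bm\Delta\|$ on $X_s$---which is $1/\mu_{\mathcal D}$---varies continuously. You instead split continuity of $m(s)=1/\mu_{\mathcal D}$ into lower and upper semicontinuity, obtaining LSC from sequential compactness of minimizers (no regularity hypothesis needed) and USC by tracking a single implicit-function curve $s\mapsto\bm\Delta(s)$ through the minimizer $\bm\Delta^*$ (regularity used only at that one point for $s=0$). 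Your route is more elementary and self-contained, avoiding the isotopy machinery and in fact consuming a slightly weaker hypothesis; the paper's route is more geometric and defers the details to the cited monograph. Your explicit attention to performing the implicit-function step inside $T_{\bm\Delta^*}\mathcal D$, so that the real-scalar constraint on the $\delta\I$ block is respected, is a genuine subtlety that the paper's sketch leaves to that reference. One small tightening: in the LSC step, first pass to a subsequence realizing $\liminf m(s_n)$ before extracting a convergent subsequence of minimizers, since unboundedness of $\{\bm\Delta_n\}$ along the full sequence does not by itself force $\liminf m(s_n)=\infty$.
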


\begin{proof}
The first statement is trivial. The proof of the second statement follows the same lines as~\cite[Th. 23.25]{Jonckheere1997}. From the implicit function theorem, it follows that $f_\omega^{-1}(0)$ is a differentiable manifold; call it $X_\omega$. $1/\mu_\mathcal{D}(\imath \omega)$ is the minimum $\|\bm\Delta\|$ such that $\bm\Delta \cap X_\omega \ne \emptyset$. Moreover, under variation of $\omega $, $X_\omega$ is deformed by an isotopy. Hence, $1/\mu_\mathcal{D}(\imath\omega)$ and therefore $\mu_\mathcal{D}(\imath\omega)$ is continuous relative to $\omega$.
\end{proof}
Continuity of $G_{\vec{z},\vec{w}_u}(s)$ and $\T^u_{\vec{z}, \vec{w}_u}(s,\delta)$ is salvaged by defining the \#-inverse to generate Eq.~\eqref{eq:min_norm} as the solution of Eq.~\eqref{eq:solution}.

\subsection{Initial State Preparation Error Response} \label{sec:sing_prep_error}

As before, the last ($N^2$-th) row of $\A$ and $\S$ vanishes and $z_{N^2}(0)=0$
as any prepared state
must be represented by a density of trace $1$. Therefore, Eq.~\eqref{eq:x0m1} gives $\vec{\hat{z}}(s) =
\T_{\vec{z},\vec{w}_u}^{u,\#}(s) \vec{\hat{w}}_u+ \T_{\vec{z},\vec{z}_0}^{u,\#} \vec{z}(0)$ and ${T}_{\vec{z},\vec{z}_0}^{u,{\#}}(s) :=(\bPhi-\delta \S)^\#$.

\begin{lemma}
Under the same assumptions as in Lemma~\ref{l:ma} the following matrix \#-inversion lemma holds
\begin{equation}
  (\bPhi-\delta \S)^\# = \bPhi^\#+\bPhi^\#\delta \left(\I-\S \bPhi^\# \delta \right)^{\#} \S \bPhi^\#.
\end{equation}
\end{lemma}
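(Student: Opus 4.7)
The plan is to prove this \#-inversion identity by explicit block-matrix computation that reduces to the classical Woodbury/matrix inversion lemma on the top-left $(N^2-1)\times(N^2-1)$ block. Since the \#-inverse zeros out the last row and column by definition and preserves only the top-left block, the off-diagonal and bottom-right parts of both sides are manifestly zero, so the entire claim boils down to a single identity on the $(N^2-1)$-dimensional block.

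Concretely, I would first write everything in $2\times2$ block form inherited from Eq.~\eqref{eq:AS}. We have
\[
\bPhi^\# = \begin{bmatrix} \bPhi_{11}^{-1} & 0 \\ 0 & 0 \end{bmatrix}, \qquad
\S = \begin{bmatrix} \S_{11} & \S_{12} \\ 0 & 0 \end{bmatrix}.
\]
A direct multiplication, as in the proof of Lemma~\ref{l:ma}, gives $\S\bPhi^\# = \bigl[\begin{smallmatrix} \S_{11}\bPhi_{11}^{-1} & 0 \\ 0 & 0\end{smallmatrix}\bigr]$ and hence
\[
\I - \S\bPhi^\#\delta = \begin{bmatrix} \I' - \delta\S_{11}\bPhi_{11}^{-1} & 0 \\ 0 & 1 \end{bmatrix}.
\]
The key preliminary step is to verify that the \#-inverse $(\I-\S\bPhi^\#\delta)^\#$ is well-defined, i.e.\ that $\I'-\delta\S_{11}\bPhi_{11}^{-1}$ is invertible. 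This follows from the hypothesis of Lemma~\ref{l:ma} together with the factorization $\I'-\delta\S_{11}\bPhi_{11}^{-1} = (\bPhi_{11}-\delta\S_{11})\bPhi_{11}^{-1}$, which is a product of two invertible $(N^2-1)\times(N^2-1)$ matrices.

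Next I would compute the RHS block by block. Since $\bPhi^\#$ and $\S\bPhi^\#$ both have vanishing last row/column in the appropriate sense, the product
\[
\bPhi^\#\delta\bigl(\I-\S\bPhi^\#\delta\bigr)^{\#}\S\bPhi^\#
= \begin{bmatrix}
\delta\bPhi_{11}^{-1}\bigl(\I'-\delta\S_{11}\bPhi_{11}^{-1}\bigr)^{-1}\S_{11}\bPhi_{11}^{-1} & 0 \\ 0 & 0
\end{bmatrix}
\]
again has trivial last row and column. Adding $\bPhi^\#$, the claim reduces to showing
\[
\bPhi_{11}^{-1} + \delta\bPhi_{11}^{-1}\bigl(\I'-\delta\S_{11}\bPhi_{11}^{-1}\bigr)^{-1}\S_{11}\bPhi_{11}^{-1}
= (\bPhi_{11}-\delta\S_{11})^{-1},
\]
matching the $(1,1)$ block of $(\bPhi-\delta\S)^\#$ on the LHS.

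The final step is to verify this scalar-block identity, which is an instance of the standard matrix inversion lemma: $(A-B)^{-1} = A^{-1} + A^{-1}B(I-A^{-1}B)^{-1}A^{-1}$ with $A=\bPhi_{11}$ and $B=\delta\S_{11}$. This is checked in one line by multiplying the right-hand side by $(A-B)$ and collecting terms. The main obstacle, such as it is, is purely notational — keeping track of where the scalar $\delta$ sits between matrix factors and confirming that the \#-inverse is applied only to block-triangular matrices with invertible top-left corner, so that the cancellations of the last row and column propagate cleanly through the calculation. No analytic subtleties arise beyond invertibility of $\bPhi_{11}-\delta\S_{11}$, which is assumed.
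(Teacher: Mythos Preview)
Your proof is correct. The strategy is essentially the same as the paper's---reduce everything to the invertible $(N^2-1)\times(N^2-1)$ block---but the execution differs. The paper left-multiplies both sides of the claimed identity by $(\bPhi-\delta\S)$ and verifies that each product equals $\diag(\I',0)$, using along the way the \#-product rule $\A^\#\bm B^\# = (\bm B\A)^\#$ to collapse $\bPhi^\#(\I-\delta\S\bPhi^\#)^\#$ into $(\bPhi-\delta\S)^\#$; equality then follows from uniqueness of the inverse of $\bPhi_{11}-\delta\S_{11}$. You instead compute both sides block by block and reduce the equality of the $(1,1)$ blocks to the classical Woodbury identity. Your route is slightly longer but more self-contained (no need to invoke the \#-product rule), and it makes explicit the check that $(\I-\S\bPhi^\#\delta)^\#$ is well defined, which the paper leaves implicit. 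One minor point: in your final line you cite the Woodbury form $A^{-1}+A^{-1}B(I-A^{-1}B)^{-1}A^{-1}$, but what you actually need is $A^{-1}+A^{-1}(I-BA^{-1})^{-1}BA^{-1}$; these are equivalent via the push-through identity $(I-BA^{-1})^{-1}B=B(I-A^{-1}B)^{-1}$, and your ``multiply by $A-B$'' verification handles the correct version directly, so no harm done.
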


\begin{proof}
We need to show that the matrix inversion lemma holds when we replace the regular inverse by the \#-inverse. $\A^\# \bm B^\# = (\bm B\A)^\# = \diag((\bm B_{11}\A_{11})^{-1},0)$ still holds and
\begin{align*}
 \text{LHS: } &(\bPhi-\delta \S)(\bPhi-\delta \S)^{\#} = \I' \\
 \text{RHS: } &(\bPhi-\delta \S) \bPhi^{\#} +(\bPhi-\delta \S)(\bPhi-\delta \S)^{\#} \delta \S \bPhi^{\#} \\
              &= \I' - \delta \S \bPhi^{\#} + \I'\delta \S \bPhi^{\#} = \I'.
\end{align*}
\end{proof}
It follows from the lemma that ${T}_{\vec{z},\vec{z}_0}^{u,\#}(s)$ can be represented as
\begin{equation}\label{eq:newG}
  \begin{pmatrix} \vec{\hat{v}}\\\vec{\hat{z}_{\vec{z}_0}}(s)\end{pmatrix} =
  \underbrace{\begin{pmatrix}\S\bPhi(s)^{\#} & \S\bPhi(s)^{\#}\\
  \bPhi(s)^{\#} & \bPhi(s)^{\#} \end{pmatrix}}_{=:G_{\vec{z},\vec{z}_0}^{(\#)}(s)}
  \begin{pmatrix} \vec{\hat{\eta}} \\ \vec{z}_0\end{pmatrix}
\end{equation}
with the feedback $\vec{\hat{\eta}} =(\delta \I) \vec{\hat{v}}$ wrapped around it. Eq.~\eqref{eq:newG} provides the substitute for Eq.~\eqref{e:Gzz0} for all $s$ and leads to ${\mu}_\mathcal{D}^{(\#)}$, as shown in Fig.~\ref{fig:S1357x0}.

\begin{remark}
Note that $(\cdot)^\#$ is not the Moore-Penrose pseudo-inverse since $(\bPhi-\delta \S)^\#(\bPhi-\delta \S)$ is not Hermitian.
\end{remark}
\begin{remark}
The \emph{matrix pseudo-inversion lemma}~\cite{matrix_pseudo_inversion_lemma, operator_pseudo_inversion_lemma}, $(\bPhi-\delta \S)^+ = \bPhi^++\bPhi^+\delta \left(\I-\S \bPhi^+ \delta \right)^+ \S \bPhi^+$, where $(\cdot)^+$ is the Moore-Penrose pseudo-inverse, is not applicable here as it requires $\A$ and $\S$ to be Hermitian, non-negative and satisfy restrictive range conditions.
\end{remark}

\begin{table}
\caption{Real, finite generalized eigenvalues of the pair $(\A,-\S_k)$. For our nominal system parameters any value of $\delta$ is admissible for $k=1,\ldots,4$. To avoid negative decoherence rates we must have $\delta\ge -1$ for $k=5$ and $\delta\ge 0$ for $k=6,7$.}\label{t:gen_eig}
\centering
\begin{tabular}{|r|l|}\hline
coefficient matrices & real, finite generalized eigenvalues
\\\hline
$\A+\delta \S_1$, $\A+\delta \S_2$ & none \\
$\A+\delta \S_3$, $\A+\delta \S_4$ & $\delta=-0.2$ (double)\\
$\A+\delta \S_5$                   & $\delta=-1$ (double)\\
$\A+\delta \S_6$, $\A+\delta \S_7$ & $\delta=-0.0057$, $-0.6346$, $-1.0462$, $-2.6465$
\\\hline
\end{tabular}
\end{table}

\section{Application: Two Qubits in a Cavity}\label{sec:2qubits}

We apply the method of perturbation-to-error transmission to two two-level atoms in a lossy cavity designed to maximize entanglement generation between the atoms~\cite{motzoi}, or more broadly between two quantum devices in the quantum Internet~\cite{quantum_internet_van_meter}. After adiabatic elimination of the cavity via a unitary transformation~\cite{motzoi}, the dynamics can be described by
\begin{equation}\label{eq:normalizedV}
  \tfrac{d}{dt}{\rho}(t) = -\imath [H_{\alpha,\Delta},\rho(t)]
  + \sum_k \gamma_k^2 \mathfrak{L}\left(\sigma_{-}^{(k)}\right) \rho(t).
\end{equation}
Denoting the raising operator by $\sigma_+=\begin{pmatrix}0 & 0\\1 & 0\end{pmatrix}$, the lowering operator by $\sigma_-:=\sigma_+^\dagger$, and defining the operators $\sigma_{\pm}^{(1)}=\sigma_{\pm}\otimes I_{2 \times 2}$ and $\sigma_{\pm}^{(2)}=I_{2 \times 2}\otimes \sigma_{\pm}$ as usual, the Hamiltonian is
\begin{align}
  H_{\alpha,\Delta}
  &=\sum_{n=1}^2\left( \alpha_n^*\sigma_+^{(n)} + \alpha_n \sigma_-^{(n)}
                       +\Delta_n \sigma_+^{(n)}\sigma_-^{(n)} \right) \nonumber \\
  &= \begin{pmatrix}
           0          & \alpha_2   & \alpha_1   & 0 \\
           \alpha_2^* & \Delta_2   & 0          & \alpha_1 \\
           \alpha_1^* & 0          & \Delta_1   & \alpha_2\\
           0          & \alpha_1^* & \alpha_2^* & \Delta_1 + \Delta_2
     \end{pmatrix},\label{eq:HV}
\end{align}
where $\alpha_1$, $\alpha_2$ are the driving fields and $\Delta_1$, $\Delta_2$ are the detuning parameters. The decoherence in Eq.~\eqref{eq:normalizedV} will be generalized by replacing it by the super-operator $\mathfrak{L}(V_\gamma)$ where
\begin{equation}
  V_\gamma = \sum_{n=1}^2 \gamma_n \sigma_-^{(n)}
  = \begin{pmatrix}
    0 & \gamma_2 & \gamma_1 & 0\\
    0 &  0 & 0 & \gamma_1 \\
    0 &  0 & 0 & \gamma_2 \\
    0 &  0 & 0 & 0
    \end{pmatrix}.
\end{equation}
Note that $[H_{\alpha,\Delta},V_\gamma]\ne 0$. To examine the system's robustness, we consider the system dynamics in the Bloch formulation,
\begin{equation}\label{e:Bloch_cavity}
  \tfrac{d}{dt} \vec{r}_p(t) = (\A_{\alpha,\Delta,\gamma}+\delta \S(\alpha_1, \alpha_2;\Delta_1,\Delta_2; \gamma_1,\gamma_2)) \vec{r}_p(t).
\end{equation}
This is relative to the Pauli basis $\{e_k \otimes e_\ell: k,\ell=1,\dotsc,4\}$, where $(e_1,e_2,e_3,e_4)=\frac{1}{\sqrt{2}}(I_{2 \times 2}, \sigma_x,\sigma_y,\sigma_z)$, with
\begin{equation}\label{e:Pauli}
  \sigma_{x}=\begin{pmatrix}0 & 1\\1& 0\end{pmatrix},
  \sigma_{y}=\begin{pmatrix}0 & \imath\\-\imath& 0\end{pmatrix},
  \sigma_{z}=\begin{pmatrix}-1 & 0\\0& 1\end{pmatrix}.
\end{equation}
The Pauli operators and components are ordered so that $\vec{z}_{16}$ is the error on $\Tr(\rho)$ and, hence, vanishes. $\A_{\alpha,\Delta,\gamma}$ is a real $16 \times 16$ matrix whose last row vanishes and whose last column depends exclusively on $\gamma$ but does not vanish for $\gamma>0$. It can further be verified that for $\alpha\neq 0$, $\gamma\neq0$ and $\Delta\neq 0$, the rank of $\A_{\alpha,\Delta,\gamma}$ is $15$, and the eigenvalues of $\A_{\alpha,\Delta,\gamma}$ have negative real parts except for one $0$ eigenvalue due to the trace constraint for $\rho$. Generally, we have stability for non-zero detuning but for $\Delta=0$ the rank drops to $14$, implying the existence of a one-dimensional subspace of steady-states.

$\A_{\alpha,\Delta,0}$ corresponds to unitary evolution and hence its eigenvalues are purely imaginary. Generically, its rank is $12$, except for special cases such as $\alpha_1=\alpha_2$ and $\Delta_1=-\Delta_2$ when $\rank(\A_{\alpha,\Delta,0})=10$. Thus the decoherence $\A_{0,0,\gamma}$ acts as a stabilizing controller for the plant $\A_{\alpha,\Delta,0}$ with state feedback control $\vec{u}(r(t)) = \A_{0,0,\gamma} \vec{r}(t)$, in accordance with Eq.~\eqref{eq:fb}.

\begin{figure} \centering
  \includegraphics[width=.9\columnwidth]{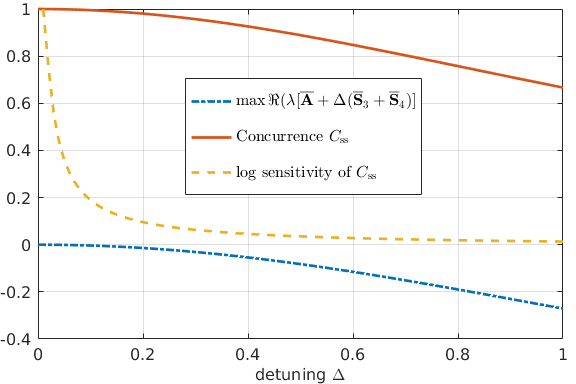}
  \caption{Maximum of the real part of the eigenvalues $\lambda$ of $\overline{\A}+\Delta (\overline{\S}_3+\overline{\S}_4)$, concurrence of steady-state and log-sensitivity of steady-state concurrence as function of detuning $\Delta$ for $\alpha=\gamma=1$. All three figures of merit are concordant, i.e. they decrease with increasing detuning.} \label{fig:detuning}
\end{figure}

\subsection{Structured Uncertainties and Frequency Response}\label{sec:robustness}

The \emph{structure} of the perturbation, $\S(\alpha_1, \alpha_2;\Delta_1,\Delta_2; \gamma_1,\gamma_2)$, of Eq.~\eqref{e:Bloch_cavity} is normalized as $\alpha_k=0,1$, $\Delta_k=0,\pm 1$, $\gamma_k=0,1$. Specifically, we distinguish the following cases:
\begin{subequations}\label{eq:S1234567}
\begin{align}
  \S_1 & = \S(1,0;0,0;0,0), &\S_2 & = \S(0,1;0,0;0,0),\\
  \S_3 & = \S(0,0;1,0;0,0), &\S_4 & = \S(0,0;0,-1;0,0),\\
  \S_5 & = \S(0,0;0,0;1,1), \\
  \S_6 & = \S(0,0;0,0;1,0), & \S_7 & = \S(0,0;0,0;0,1).
\end{align}
\end{subequations}
$\S_5$ corresponds to \emph{collective} dissipation, while $\S_6$, $\S_7$ are structured perturbations corresponding to \emph{single} qubit spontaneous emission. For structures $\S_6,\S_7$, the sizes are $\delta=\gamma_1^2,\gamma^2_2$, resp.
Due to the structure of $\S_5$ this does not reduce to the case of a single, lossy qubit, but rather a special case of a two-qubit system with coupling mediated by a cavity.

The $\|\bPhi^\#(s)\|$ and the error gain $\norm{\T_{\vec{z},\vec{w}_u}^{u}(s,\delta\S_k)}$ for the selected perturbations as functions of frequency $s=\imath\omega$ are shown in Fig.~\ref{fig:2qubit_example_gain} for nominal plant and controller parameters $\alpha_1=\alpha_2=1$, $\gamma_1=\gamma_2=1$ and $\Delta_1 = -\Delta_2 = \Delta = 1$ and different values of $\delta$. Due to symmetry, the effects of $\S_1$ and $\S_2$ are the same, and similarly for $\S_3$ and $\S_4$, and $\S_6$ and $\S_7$, respectively. Hence, it suffices to consider four perturbations. Except for $\S_5$, the bound of Eq.~\eqref{eq:bound} for dephasing in the Hamiltonian basis is violated here, as expected, as this system is not a system with dephasing in the Hamiltonian basis. We focus on sensitivity to low-frequencies as we expect low-frequency drift in the detuning and coupling parameters and $1/f$ noise, e.g., due to laser flicker noise in atomic clock systems~\cite{nature_photonics_clock} or magnetic flux noise in superconducting qubits~\cite{NIST_1_over_f}.

Plotting $\norm{\bPhi^{\#}(\imath\omega)}$ versus frequency in Fig.~\ref{fig:2qubit_example_gain}(a) shows it is maximal for $\omega=0$. The figure also shows that the \#-inverse gives different values from the Moore-Penrose inverse at $s=0$. For non-zero $\delta$, the gain $\norm{\T_{\vec{z},\vec{w}_u}^u( \imath\omega,\delta\S_k)}$ need not be maximal at $\omega=0$ as illustrated in Fig.~\ref{fig:2qubit_example_gain}(b,c). Although, Fig.~\ref{fig:2qubit_example_gain2}(b) suggests that for most perturbations the maximum gain is still achieved for $\omega=0$. The maximum of $\norm{\T_{\vec{z}, \vec{w}_u}^{u} (\imath\omega,\delta\S_k)}$ depends on the perturbation $\S_k$ and strength $\delta$. Fig.~\ref{fig:2qubit_example_gain2}(a) suggests that the system is most sensitive to perturbations $\S_3$ in the detuning. Other work~\cite{motzoi} suggests that robust solutions can be found outside of the regime where $\Delta_1 =-\Delta_2$, but such investigations are outside of the scope of this work.

\subsection{Bounding Dynamic Perturbation Transmission}\label{sec:mu}

Another way to assess robustness against parameter variation is to examine the structured singular value $\mu_{\mathcal{D}}$ bounding the error transmission as $\|\T_{\vec{z} \vec{w}}^{u}(s,\delta\S)\| \leq \mu_{\mathcal{D}}(s)$,
for $\delta < 1/\mu_{\mathcal{D}}(s)$ as made precise by Th.~\ref{th:robust_performance}. The results of Sec.~\ref{sec:pseudo_inversion} apply if the submatrices $\bPhi_{11}$ and $\S_{11}$ of $\bPhi(0)$ and $\S$ are invertible. The $16 \times 16$ matrices $\A_{\alpha,\Delta,\gamma}$ and $\A_{\alpha, \Delta, \gamma}+\delta \S_k$ of Eq.~\eqref{e:Bloch_cavity}, evaluated at $\alpha_1=\alpha_1=1$, $\Delta_1 =-\Delta_2=0.1$ and $\gamma_1=\gamma_1=1$, are singular with rank $15$. $\delta$-generically $\S_k$ has rank $15$ for $k \neq 5$, and rank $14$ for $k=5$. The nongeneric $\delta$-values are computed as generalized eigenvalues of the pair $(\A,-\S_k)$ and are displayed in Table~\ref{t:gen_eig}.

The minimum structured ``destabilizing'' perturbation $\bm \Delta$ need not be unique as it is easily seen from Eq.~\eqref{eq:AS} that $G_{\vec{z},\vec{w}_u}(0)$ has one vanishing row and one vanishing column, which causes the solution to $\det(I+G_{\vec{z},\vec{w}_u}(0)\bm \Delta)=0$ to have a completely arbitrary row and column but the size $\|\bm \Delta\|$ is uniquely defined.

Fig.~\ref{fig:S1357_all}(a) shows simulation results for the structured uncertainties $\S_1, \S_3, \S_5, \S_7$ as a function of frequency on a frequency scale comparable with that of Fig.~\ref{fig:2qubit_example_gain}. Simulation results for the structured singular value $\mu$ as $s$ decreases to $0$ along the real axis are shown in Fig.~\ref{fig:S1357_all}(b). Except for $\S_1$ they show continuity of $\mu_D$ and the discrepancy between the upper and lower bounds is very mild (not visible on a log-scale). The sensitivity for $\S_1$ indicates that asymmetric perturbation of the driving fields is detrimental to entanglement generation. This argument is strengthened as the behavior disappears if $\S_1$ is replaced by a symmetric perturbation of the driving fields,  $\S(1,1;0,0;0,0)$ (not shown). Fig.~\ref{fig:S1357_all}(c) shows the upper bounds on $\mu_\mathcal{D}^{(\#)}$ for initial state preparation error transmission (Sec.~\ref{sec:sing_prep_error}) for $s\to 0$ along the real axis.


\subsection{Concurrence and its Log-Sensitivity}\label{sec:concurrence}

Two-qubit entanglement can be measured by the \emph{concurrence} of the two-qubit density operator $\rho$~\cite{EntanglementConcurrence},
\begin{equation}
  C(\rho)=\max\{0,\lambda_1-\lambda_2-\lambda_3-\lambda_4\},
\end{equation}
where $\lambda_k$ are the eigenvalues, in decreasing order, of $\sqrt{\sqrt{\rho}\tilde{\rho}\sqrt{\rho}}$ with $\tilde{\rho}=(\sigma_y \otimes \sigma_y)\mathrm{conj}(\rho)(\sigma_y \otimes \sigma_y)$, 
and $\sigma_x$, $\sigma_y$, and $\sigma_z$ are the Pauli operators.

For anti-symmetric detuning, $\Delta_1=-\Delta_2=\Delta$, symmetric driving, $\alpha_1=\alpha_2=\alpha$, symmetric dissipation $\gamma_1=\gamma_2=\gamma$, and $\Delta$, $\alpha$ and $\gamma$ real, i.e., in the case considered here,
\begin{equation*}
  \ket{\Psi_{\ss}} = \left(1/\sqrt{\Delta^2 + 2\alpha^2}\right)
  [\Delta,\alpha ,-\alpha, 0]^T
\end{equation*}
is a steady-state of the system: $H_{\alpha,\Delta}\ket{\Psi_{\ss}} = \vec{0}$ and $V_\gamma\ket{\Psi_{\ss}}=\vec{0}$. Thus, $\rho_{\ss} =\ket{\Psi_{\ss}} \bra{\Psi_{\ss}}$ satisfies $\tfrac{d}{dt}{\rho}_{\ss}=0$. The concurrence of this steady-state is~\cite{Wang2010}
\begin{equation*}
  C_{\ss}:=C(\rho_{\ss}) = \left[\tfrac{1}{2}(\Delta/\alpha)^2+1 \right]^{-1}.
\end{equation*}
Since this steady-state is generically globally attractive, any initial state converges to it, and its concurrence determines the performance. To maximize the concurrence, we want $\Delta/\alpha$ as small as possible, but in the limit of no detuning, $\Delta\to0$, the attractivity of the steady-state is lost. So there are trade-offs in the speed of convergence and robustness.

If we measure the performance of the control scheme by the concurrence $C_{\ss}$ of the steady-state and compare it with the other measures shown in Fig.~\ref{fig:detuning}, there is concordance~\cite{statistical_control} between $C_{\ss}$ and its log-sensitivity, i.e. they both decrease with increasing detuning. The concurrence error $1-C_{\mathrm{ss}}$ increases while its log-sensitivity decreases. This is in agreement with the classical conflict between the sensitivity function $S$ and its log-sensitivity, the complementary sensitivity $T$, for which $S+T=1$ holds. The concordance between $C_{\ss}$ and the stability margin measured as
$\left|\max_n \{\Re \lambda_n(\overline{\A}+\Delta(\overline{\S}_3+\overline{\S}_4))\}\right|$ is also classical as the lower the concurrence performance, the higher the stability margin.

\section{Conclusion}

We have developed a robust performance formalism for controlled open quantum systems subject to a variety of structured uncertainties, ranging from uncertain parameters in the Hamiltonian to uncertainties caused by initial state preparation errors and decoherence. The existence of closed-loop poles at $0$ in the Bloch equation violates the traditional closed-loop stability requirement. The formalism addresses these issues and allows quantification of the transmission of the dynamic disturbance to quantum state error subject to structured uncertainties while addressing the continuity of $\mu_{\mathcal{D}}$. Proceeding from the general Lindblad equation gives the formalism wide physical applicability. The cavity case-study reveals that quantum control requires enlarging the concept of a ``performance measure'' to include novel measures such as entanglement concurrence, which may have properties unconventional in control such as being nonlinear in the state. While robustness of a nonlinear performance remains to be developed, the cavity example suggests a trade-off between the concurrence and its log-sensitivity due to decoherence, as expected in conventional robust control. Earlier results already show that coherent control escapes some of the classical limitations~\cite{statistical_control}, but under decoherence, classicality re-emerges~\cite{CDC_decoherence}. It is an open question in quantum control whether coherent control allows objectives that are traditionally conflicting to co-exist in general~\cite{statistical_control, CDC_phase, Edmond_IEEE_AC, soneil_mu}. Additionally, the development of more general tools and methods for quantum control that are not derived from existing classical methods requires further work.

\appendix\label{a:proof}
\setcounter{definition}{0}
\numberwithin{definition}{subsection}
\setcounter{theorem}{0}
\numberwithin{theorem}{subsection}
\setcounter{corollary}{0}
\numberwithin{corollary}{subsection}
\setcounter{lemma}{0}
\numberwithin{lemma}{subsection}

\subsection{Multiple Eigenvalues}\label{a:multiple}

Let $\mathcal{H}(N)$ be the set of $N \times N$ Hermitian matrices.

\begin{definition}
A property $\mathfrak{P}$ of a set of matrices $\mathcal{M}$ is \emph{generic} if the subset of matrices where it holds is open and dense in $\mathcal{M}$ for a relevant topology on $\mathcal{M}$. A submanifold $\mathfrak{M}$ is $\mathbb{R}^*$-homogeneous if $\mathbb{R}^*\mathfrak{M}=\mathfrak{M}$.
\end{definition}

\begin{theorem}~\cite[Corollary 4.12]{GutkinJonckheereKarow}.
For $N \geq 2$, the subset $\mathfrak{M}_{n_1,n_2,\dotsc,n_{\bar{N}}}$ of $\mathcal{H}(N)$ with eigenvalue multiplicities $n_1,n_2,\dotsc,n_{\bar{N}}$ is a $\mathbb{R}^*$-homogeneous sub-manifold of codimension $\left(\sum_{k=1}^{\bar{N}} n_k^2\right)-\bar{N}$ in $\mathcal{H}(N)$. The subset $\mathfrak{V}$ of $\mathcal{H}(N)$ with multiple eigenvalues is a real algebraic variety of codimension $3$ in $\mathcal{H}(N)$.
\end{theorem}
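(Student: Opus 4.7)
The plan is to prove the submanifold claim via a Lie-group orbit argument, check $\mathbb{R}^*$-homogeneity by inspection, and then derive the codimension of $\mathfrak{V}$ by minimizing the resulting codimension formula over non-generic partitions of $N$. First I would recall that $U(N)$ acts smoothly on $\mathcal{H}(N)$ by conjugation $H \mapsto U H U^\dagger$, preserving the eigenvalue multiplicity pattern. Writing a representative matrix with multiplicities $(n_1,\dotsc,n_{\bar{N}})$ and distinct real eigenvalues $\lambda_1,\dotsc,\lambda_{\bar{N}}$ as $H=U\diag(\lambda_1 I_{n_1},\dotsc,\lambda_{\bar{N}} I_{n_{\bar{N}}})U^\dagger$, one sees that the stabilizer of the conjugation action is the block-unitary subgroup $U(n_1)\times\cdots\times U(n_{\bar{N}})$. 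Each orbit therefore has real dimension $N^2-\sum_k n_k^2$, and allowing the $\bar{N}$ eigenvalues to vary in the open subset of $\mathbb{R}^{\bar{N}}$ where they remain pairwise distinct contributes $\bar{N}$ transverse directions, giving $\dim\mathfrak{M}_{n_1,\dotsc,n_{\bar{N}}}=N^2-\sum_k n_k^2+\bar{N}$, hence codimension $\sum_k n_k^2-\bar{N}$ in $\mathcal{H}(N)$.

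Next, $\mathbb{R}^*$-homogeneity is immediate: for any $c\in\mathbb{R}^*$, the matrix $cH$ has eigenvalues $c\lambda_k$ with the same multiplicities, so $c\,\mathfrak{M}_{n_1,\dotsc,n_{\bar{N}}}=\mathfrak{M}_{n_1,\dotsc,n_{\bar{N}}}$. For the variety statement, $\mathfrak{V}$ is the zero set of the discriminant of $\det(xI-H)$, a real polynomial in the real and imaginary parts of the entries of $H$, so $\mathfrak{V}$ is a real algebraic variety. Its codimension equals the minimum of $\sum_k n_k^2-\bar{N}$ over partitions of $N$ with at least one $n_k\ge 2$. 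A short case analysis suffices: $(2,1,\dotsc,1)$ with $\bar{N}=N-1$ gives $4+(N-2)-(N-1)=3$, whereas $(2,2,1,\dotsc,1)$ gives $6$, $(3,1,\dotsc,1)$ gives $8$, and every further degeneracy strictly increases the codimension. The minimum is thus $3$, attained by a single double eigenvalue among otherwise simple ones.

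The main obstacle is the smooth embedding in step one: a bare parameter count does not yet guarantee that $\mathfrak{M}_{n_1,\dotsc,n_{\bar{N}}}$ is an \emph{embedded} submanifold. The rigorous argument uses the fact that on the stratum of Hermitian matrices of a fixed multiplicity type, the $U(N)$-conjugation action is proper with constant isotropy type, so the principal orbit theorem equips $\mathfrak{M}_{n_1,\dotsc,n_{\bar{N}}}$ with a smooth fiber-bundle structure over the open eigenvalue simplex, with typical fiber the flag-type manifold $U(N)/\prod_k U(n_k)$. Smooth dependence of the spectral projectors on $H$, via the holomorphic functional calculus applied to small contours enclosing each eigenvalue cluster, then shows that the inverse parametrization is smooth, confirming the embedded submanifold structure and sealing the dimension count. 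This is the content of~\cite{GutkinJonckheereKarow}, which I would invoke for the detailed verification rather than reproduce in full.
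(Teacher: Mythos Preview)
The paper does not supply its own proof of this theorem: it is quoted verbatim as \cite[Corollary~4.12]{GutkinJonckheereKarow} and then used as a black box for the two corollaries that follow in Appendix~A.1. So there is nothing in the paper to compare your argument against.

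That said, your proposal is sound on its own terms and is essentially the standard route. The orbit-stabilizer count for the $U(N)$-conjugation action is correct ($\dim U(N)=N^2$, stabilizer $\prod_k U(n_k)$ of dimension $\sum_k n_k^2$, plus $\bar N$ eigenvalue parameters), and your minimization over non-generic partitions to obtain codimension~$3$ is exactly the argument the paper alludes to when it writes ``$(\sum_{k} n_k^2)-\bar N \ge 3$ with equality only if a unique $n_{k^*}=2$'' in the proof of Corollary~\ref{c:3_parameters}. You were also right to flag that a dimension count is not yet an embedding: the passage from parameter count to embedded submanifold genuinely requires either the proper-action/constant-isotropy argument or the holomorphic functional calculus for the spectral projectors, and you correctly defer those details to the cited reference rather than claim them for free.
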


\begin{corollary}
The property ``no multiple eigenvalues'' is generic in $\mathcal{H}(N)$.
\end{corollary}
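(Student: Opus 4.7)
The plan is to deduce the corollary directly from the theorem that was just invoked, by unpacking the definition of ``generic'' (open and dense) and using the structural information that $\mathfrak{V}$ is a real algebraic variety of codimension $3$ in $\mathcal{H}(N)$. Let $\mathfrak{G} := \mathcal{H}(N) \setminus \mathfrak{V}$ denote the set of Hermitian matrices without multiple eigenvalues; we must check that $\mathfrak{G}$ is open and dense in $\mathcal{H}(N)$ with respect to the Euclidean (equivalently, operator-norm) topology inherited from the real vector space structure of $\mathcal{H}(N) \cong \mathbb{R}^{N^2}$.

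For openness, I would note that a real algebraic variety is the zero set of a finite collection of real polynomials (in the real and imaginary parts of the matrix entries), hence closed in the Euclidean topology. Concretely, one can take the discriminant of the characteristic polynomial $\det(\lambda I - H)$, which is a polynomial in the entries of $H$ and vanishes precisely when $H$ has a repeated eigenvalue. Thus $\mathfrak{V}$ is the zero set of this single real polynomial (after separating real and imaginary parts), so $\mathfrak{V}$ is closed and $\mathfrak{G}$ is open.

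For density, I would use the codimension bound: since $\mathfrak{V}$ has codimension $3$ in $\mathcal{H}(N)$ (in particular, strictly positive codimension), it is a proper real algebraic subvariety. A proper real algebraic subvariety of $\mathbb{R}^n$ has empty interior, since otherwise the defining polynomial would vanish on an open set and hence identically. Equivalently, the discriminant polynomial is not identically zero (e.g., any diagonal matrix with distinct real entries gives a nonvanishing value), so its zero set has Lebesgue measure zero and therefore empty interior. Consequently $\mathfrak{G}$ is dense in $\mathcal{H}(N)$.

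Combining the two, $\mathfrak{G}$ is open and dense in $\mathcal{H}(N)$, so by the definition given just above the theorem, the property ``no multiple eigenvalues'' is generic. The only mild subtlety — and really the only place where one has to be careful — is to verify that the notion of ``real algebraic variety'' invoked in the theorem indeed implies closedness and empty interior in the Euclidean topology on $\mathcal{H}(N)$; this is standard but should be mentioned explicitly, since the definition of generic in the paper is phrased for ``a relevant topology'' on $\mathcal{M}$ and we are implicitly fixing the Euclidean one here.
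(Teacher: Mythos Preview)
Your argument is correct. The paper states this corollary without proof, treating it as an immediate consequence of the cited theorem that $\mathfrak{V}$ is a real algebraic variety of codimension $3$; your proposal simply spells out the two standard steps (closedness of algebraic sets gives openness of the complement, positive codimension/non-identically-vanishing discriminant gives density) that the authors left implicit.
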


\begin{corollary}\label{c:3_parameters}
Let $H_\theta$ be a family of Hermitian matrices in $\mathcal{H}(N)$ that depends continuously on the real parameter $\theta$. Let $H_0 \in \mathcal{H}(N) \setminus \mathfrak{V}$ be the subset of matrices that has no multiple eigenvalues. Then, generically, a 3D (real) perturbation $\theta=(\theta_1, \theta_2, \theta_3)$ is necessary to reach multiple eigenvalues. Under nongeneric conditions, more parameters are needed, unless there exists a unique $k^*$ such that $n_{k^*}=2$ and $n_{k \ne k^*}=1$, in which case three parameters still suffice.
\end{corollary}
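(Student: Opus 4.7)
The plan is to reduce the statement to a transversality argument built on top of the codimension data just established. First I would identify which stratum of $\mathfrak{V}$ has the smallest codimension: among all multiplicity patterns $(n_1,\dots,n_{\bar N})$ with at least one $n_k \geq 2$, the pattern $(2,1,\dots,1)$ (a single double eigenvalue, all others simple) yields codimension $(4+(N-2))-(N-1)=3$, while every other pattern yields strictly greater codimension (for example $8$ for a triple eigenvalue, $6$ for two disjoint double eigenvalues). Thus $\mathfrak{M}_{2,1,\dots,1}$ is the top-dimensional stratum of $\mathfrak{V}$, and the codimension $3$ asserted by the preceding theorem is realized precisely by this pattern.

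Second, I would invoke elementary transversality for the map $\theta \mapsto H_\theta$ from a neighborhood of $0$ in $\mathbb{R}^d$ into $\mathcal{H}(N)$. After an arbitrarily small perturbation, the map is transverse to every stratum $\mathfrak{M}_{n_1,\dots,n_{\bar N}}$. If $d < \operatorname{codim}\mathfrak{M}$, transversality forces an empty intersection in a neighborhood of $H_0$; if $d \geq \operatorname{codim}\mathfrak{M}$, the intersection is generically a submanifold of dimension $d-\operatorname{codim}\mathfrak{M}$. Applied to $\mathfrak{M}_{2,1,\dots,1}$ this shows that fewer than three real parameters cannot produce multiple eigenvalues generically, and that three parameters suffice, with the first contact lying on $\mathfrak{M}_{2,1,\dots,1}$ because all deeper strata have codimension strictly greater than $3$ and are avoided generically by a $3$-parameter family. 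For sufficiency I would exhibit an affine deformation $H_\theta = H_0+\sum_{i=1}^3 \theta_i P_i$ with $P_i \in \mathcal{H}(N)$ chosen so that $\{P_1,P_2,P_3\}$ spans a normal complement to $\mathfrak{M}_{2,1,\dots,1}$ at a nearby matrix; the implicit function theorem then yields values of $\theta$ at which $H_\theta$ has a double eigenvalue.

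For the nongeneric clause the same transversality principle applies: reaching a stratum whose multiplicity pattern is not $(2,1,\dots,1)$ requires at least $\left(\sum_k n_k^2\right)-\bar N$ real parameters, strictly more than three. The exception in the corollary records exactly the borderline case in which the target pattern still has a unique double eigenvalue and all others simple, for which three parameters remain sufficient.

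The main obstacle is the sufficiency side combined with the weak regularity in the hypothesis: the corollary asks only that $\theta \mapsto H_\theta$ be continuous, whereas transversality is a $C^1$ notion. I would handle this by approximating the family by a smooth one and then choosing the normal directions $P_i$ explicitly using the $\mathbb{R}^*$-homogeneity of the stratum together with a local normal-form reduction to a traceless $2\times 2$ Hermitian block acting in the $2$-dimensional eigenspace, whose three real degrees of freedom transversally resolve the double eigenvalue.
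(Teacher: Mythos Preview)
Your argument is sound and reaches the same conclusion, but by a different mechanism than the paper. The paper proceeds via a nearest-point projection: it sets $H_{\theta^*}=\arg\min_{H_\theta\in\mathfrak{V}} d(H_\theta,H_0)$, observes that at a smooth (hence generic) point of $\mathfrak{V}$ the normal space is three-dimensional, and takes the coordinates of $H_{\theta^*}-H_0$ in an orthonormal completion $\{\vec\sigma_1,\vec\sigma_2,\vec\sigma_3\}$ of the tangent space as the three required parameters; the nongeneric clause is handled by noting that singular points of $\mathfrak{V}$ lie on deeper strata $\mathfrak{M}_{n_1,\dots,n_{\bar N}}$ of codimension $(\sum_k n_k^2)-\bar N\geq 3$, with equality only for the pattern $(2,1,\dots,1)$. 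You instead run Thom transversality for $\theta\mapsto H_\theta$ against the stratification of $\mathfrak{V}$, which is the standard differential-topology route and makes the dimension count (fewer than three parameters generically miss $\mathfrak{V}$; three generically meet only the top stratum) fully explicit. Your approach buys a cleaner separation of necessity and sufficiency and extends uniformly to every stratum; the paper's projection argument is more constructive, actually naming the three perturbation directions as the normal frame at the closest point of $\mathfrak{V}$. Your caveat about the continuity hypothesis versus the $C^1$ requirement of transversality is well taken; the paper's proof, which invokes differentiability of the stratum and continuity of the projection $\pi$, tacitly makes the same regularity upgrade.
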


\begin{proof}
Consider $H_{\theta^*}=\arg \min_{H_\theta\in \mathfrak{V}}d(H_\theta,H_0)$, defining a projection $\pi: \mathcal{H}(N) \to \mathfrak{V}$ orthogonal to the stratum of $\mathfrak{V}$ that contains $H_0$. Assume $H_{\theta^*}$ is a differentiable point of $\mathfrak{V}$. Let $\vec{\sigma}_{\mathfrak{V}}$ be an orthonormal basis of the tangent space of $\mathfrak{V}$ at $H_{\theta^*}$ and complete it to an orthonormal basis $\{\vec{e}_{\mathfrak{V}},\vec{\sigma}_1,\vec{\sigma}_2,\vec{\sigma}_3\}$ of $\mathcal{H}(N)$. Then the coordinates of $(H_{\theta^*}-H_0)$ relative to $\{\vec{\sigma}_1,\vec{\sigma}_2,\vec{\sigma}_3\}$ are the three parameters necessary to reach multiple eigenvalues. This situation is generic in $\mathfrak{V}$. Since $H_{\theta^*}$ is differentiable in $\mathfrak{V}$, there exists a neighborhood $\mathcal{N}_{H_{\theta^*}}$ where $\mathfrak{V}$-genericity remains valid. Since $\pi$ is continuous, $\pi^{-1}(\mathcal{N}_{H_{\theta^*}})$ is a neighborhood of $H_0$ where the projection is differentiable; hence $\mathfrak{V}$-genericity. If $H_{\theta^*}$ is a singular point, it belongs to a manifold $\mathfrak{M}_{n_1,n_2,\dotsc,n_{\bar{N}}}$ of the algebraic variety $\mathfrak{V}$. Construct a basis $\vec{\sigma}_{\mathfrak{M}}$ of the tangent space, and complete it to a basis of $\mathcal{H}(N)$, viz., $\vec{\sigma}_{\mathfrak{M}},\vec{\sigma}_1,\dotsc,\vec{\sigma}_{\left(\sum_{k=1}^{\bar{N}} n_k^2\right)-\bar{N}}$. Clearly, $(\sum_{k=1}^{\bar{N}} n_k^2)-\bar{N}\geq 3$ parameters are needed, with equality only if a unique $k_* n_{k^*}=1$; for all other $n_k=1$ (see~\cite[p.\ 162]{GutkinJonckheereKarow} for details regarding that last inequality).
\end{proof}

This result can be traced back to~\cite{von_neumann_wigner} but we have clarified the ``in general'' on ~\cite[p. 553]{von_neumann_wigner}. The most general singularities in $\mathfrak{V}$ are those of the smallest codimension, i.e., $3$. Such an eigenstructure is \emph{in general} unstable under perturbation as eigenvalues with multiplicity $>1$ split into lower multiplicity eigenvalues under universal unfolding~\cite{CastrigianoHayes1993}. Securing stability of the eigenstructure of $H$ (and $V$) requires $\bar{N}=N$, which can be justified invoking \emph{genericity}. See also~\cite{GutkinJonckheereKarow,adiabatic}.

The argument can be reversed to split a multiple eigenvalue into simple eigenvalues under the $\mathfrak{V}$-generic condition if we have three uncertain parameters. The coupled four-qubit system has $16$ parameters in its $4 \times 4$ Hamiltonian, all of which are uncertain to some degree. If we consider the simplest case of exactly one double eigenvalue, then $\mbox{codim}(\mathfrak{M}_{n_1=2,n_2=n_3=1})=3$, i.e., the double eigenvalue can be split with three parameters. Any higher multiplicity structure of the eigenvalues (precisely, $\bar{N}<3$) would create $\mbox{codim}(\mathfrak{M}_{n_1, n_2, \dotsc, n_{\bar{N}}})>3$ and more than three parameters would be needed to achieve an arbitrary splitting. Simply put, given the high number of uncertain parameters in quantum systems, the ``no multiple eigenvalues'' assumption is reasonable.

\subsection{Dole\v{z}al's Theorem and its Extension}\label{a:Dolezal}

The $16 \times 16$ Bloch state transition matrix $\A_\theta$ in the cavity example in Sec.~\ref{sec:2qubits} has generic rank $15$,
dropping for some $\theta$s, raising the question of whether the eigendecomposition can be made continuous in $\theta=[\alpha_1,\alpha_2,\Delta_,\Delta_2,\gamma_1,\gamma_2]$.

\begin{theorem}[Dole\v{z}al~\cite{Dolezal1}]\label{t:Dolezal}
If $\A_{\theta}$ is continuous in $\theta$ with constant rank $r$ for a subset $\Theta_0 \subset \Theta$, there exist $M_\theta$, $B_\theta$ continuous in $\theta$ with $M_\theta$ nonsingular such that $\A_{\theta}M_\theta=(B_\theta,0)$ where $B_\theta$ has $r$ columns.
\end{theorem}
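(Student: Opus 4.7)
The plan is to construct $M_\theta$ and $B_\theta$ first locally on a neighborhood of an arbitrary point $\theta_0\in\Theta_0$ using Cramer's rule, and then to globalize using the constant-rank hypothesis, which makes $\{\ker\A_\theta\}_{\theta\in\Theta_0}$ into a continuous vector subbundle of the trivial bundle $\Theta_0\times\mathbb{R}^n$ of rank $n-r$, where $n$ is the number of columns of $\A_\theta$.

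For the local step I would fix $\theta_0\in\Theta_0$ and pick an ordered index set $J=\{i_1,\dotsc,i_r\}$ so that the columns of $\A_{\theta_0}$ with these indices contain an invertible $r\times r$ minor. By continuity of the entries of $\A_\theta$ and of the determinant, the chosen minor stays nonzero on an open neighborhood $U_{\theta_0}\subset\Theta_0$, so the corresponding $r$ columns remain linearly independent there. Since $\rank\A_\theta\equiv r$, every column with index $j\notin J$ is then a unique linear combination of these $r$ independent columns, and Cramer's rule expresses the coefficients $c_{jk}(\theta)$ as ratios of minors of $\A_\theta$ with a common nonvanishing denominator on $U_{\theta_0}$, so each $c_{jk}$ is continuous. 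Assembling the permutation $P_J$ that moves the indices in $J$ into positions $1,\dotsc,r$ together with an upper-triangular matrix $N_\theta$ whose leading $r\times r$ block is the identity and whose remaining entries $-c_{jk}(\theta)$ are placed so as to annihilate the last $n-r$ columns of $\A_\theta P_J N_\theta$, one obtains $M_\theta:=P_J N_\theta$, continuous and invertible on $U_{\theta_0}$ (with $\det M_\theta=\pm 1$), satisfying $\A_\theta M_\theta=(B_\theta,\,0)$ where $B_\theta$ collects the $r$ selected independent columns.

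The main obstacle is globalizing this local factorization to all of $\Theta_0$: the index set $J$ yielding an invertible minor depends on $\theta$, and naive patching across overlaps where the natural choice of $J$ changes can destroy invertibility of $M_\theta$. The clean way to handle this is to note that the last $n-r$ columns of any locally constructed $M_\theta$ span $\ker\A_\theta$, which under the constant-rank hypothesis is a continuous vector subbundle of $\Theta_0\times\mathbb{R}^n$; its orthogonal complement with respect to the standard inner product furnishes a continuous complementary subbundle. When $\Theta_0$ is contractible -- in particular when it is an interval, which is the relevant regime for the parametric sweeps used elsewhere in the paper -- both subbundles are trivial and admit continuous global frames whose concatenation yields a globally continuous invertible $M_\theta$. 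For more general $\Theta_0$, the local Cramer's rule pieces can be reconciled over overlaps by using that the change of basis between two valid choices of $J$ on a common overlap is itself continuous and invertible; this is the delicate but standard bookkeeping step that ultimately completes Dole\v{z}al's theorem.
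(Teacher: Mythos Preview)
The paper does not supply a proof of this theorem: it is stated with attribution to Dole\v{z}al and left uncited beyond the reference. The only remark the paper makes about the argument is after Theorem~\ref{t:SilvermanBucy}, where it notes that Dole\v{z}al's result was originally proved over a real interval and ``can be extended to multiple parameters by a partition of unity argument.'' So there is no in-paper proof to compare against; what follows is an assessment of your sketch on its own merits and relative to that remark.

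Your local step is correct and is the standard opening: select $r$ columns containing a nonvanishing $r\times r$ minor at $\theta_0$, use continuity of determinants to keep it nonvanishing on a neighborhood, and use Cramer's rule to express the remaining columns with continuous coefficients. Your globalization via the observation that $\{\ker\A_\theta\}$ is a continuous rank-$(n-r)$ subbundle, together with triviality of vector bundles over a contractible base, is a clean and valid route for the interval case (and more generally contractible $\Theta_0$). This is a more topological packaging than the partition-of-unity patching the paper alludes to, but it buys exactly the same conclusion with less bookkeeping.

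One genuine gap: your closing sentence about ``reconciling'' local pieces over an arbitrary $\Theta_0$ is not just vague, it is false in general. The last $n-r$ columns of any admissible $M_\theta$ form a global continuous frame for $\ker\A_\theta$, so the existence of $M_\theta$ is \emph{equivalent} to triviality of the kernel bundle. Over $\Theta_0=S^1$, the rank-one family
\[
\A_\phi=\tfrac{1}{2}\begin{pmatrix}1-\cos\phi & -\sin\phi\\ -\sin\phi & 1+\cos\phi\end{pmatrix}
\]
has the M\"obius line bundle as its kernel, and no continuous invertible $M_\phi$ with $\A_\phi M_\phi=(B_\phi,0)$ exists. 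So restrict your claim to intervals (or contractible $\Theta_0$), which is both the setting of the original theorem and the regime actually used in the paper.
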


\begin{corollary}\label{c:orthonormal}
Under the same conditions as in Th.~\ref{t:Dolezal}, the basis of the null space can be continuous and orthonormal.
\end{corollary}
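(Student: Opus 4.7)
The plan is to first extract a continuous (but possibly non-orthonormal) basis of the null space directly from Dole\v{z}al's theorem, and then apply the Gram--Schmidt procedure pointwise in $\theta$ to obtain an orthonormal basis that remains continuous.

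First, write the columns of $M_\theta$ as $M_\theta = (M_\theta^{(1)}, M_\theta^{(2)})$, where $M_\theta^{(1)}$ consists of the first $r$ columns and $M_\theta^{(2)}$ of the last $n-r$ columns (with $n$ the number of columns of $\A_\theta$). The identity $\A_\theta M_\theta = (B_\theta, 0)$ then gives $\A_\theta M_\theta^{(2)} = 0$, so the columns of $M_\theta^{(2)}$ all lie in $\ker \A_\theta$. Because $M_\theta$ is nonsingular for every $\theta \in \Theta_0$, these $n-r$ columns are linearly independent, hence form a basis of the $(n-r)$-dimensional null space, and they depend continuously on $\theta$ because $M_\theta$ does.

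Next, apply the Gram--Schmidt orthonormalization procedure to the columns of $M_\theta^{(2)}$ at each $\theta \in \Theta_0$. Each step consists of inner products, scalar multiplications, vector subtractions, and division by the norm of the intermediate residual, all of which are continuous operations wherever the residual is nonzero. The crucial point is that every intermediate residual is nonzero, since the columns of $M_\theta^{(2)}$ are linearly independent at each $\theta$, so no Gram--Schmidt step can produce the zero vector. Thus the resulting orthonormal vectors are obtained as compositions of continuous functions of $\theta$ and are themselves continuous, yielding the desired continuous orthonormal basis of $\ker \A_\theta$.

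The only place where something could go wrong is the nonvanishing of the Gram--Schmidt denominators, but this is an immediate consequence of the nonsingularity of $M_\theta$ on $\Theta_0$, so no subtlety beyond invoking Dole\v{z}al's theorem and a standard continuity argument for Gram--Schmidt is required.
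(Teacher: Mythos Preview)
Your proposal is correct and follows essentially the same approach as the paper: extract a continuous basis of the null space from the last $n-r$ columns of $M_\theta$ in Dole\v{z}al's theorem, then apply Gram--Schmidt pointwise and argue that continuity is preserved because the residuals never vanish. The paper's proof spells out the inductive step of Gram--Schmidt in slightly more detail (explicitly verifying that projecting onto $e_1^\perp$ preserves linear independence of the remaining vectors), but this is exactly the content of your observation that the denominators never vanish.
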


\begin{proof}
Let $\{e_1,\dotsc,e_n\}$ be a basis depending continuously on a parameter $\theta$. Take the first vector $e_1$, $\|e_1\|=1$, of the orthonormalized basis and consider $P_{e_1^\perp}\{e_2,\dotsc,e_n\}$. Assume the latter $n-1$ vectors are not independent. Then for some $\alpha_2,\dotsc,\alpha_n$ not identically vanishing, we would have $P_{e_1^\perp}\left(\sum_{i=2}^n \alpha_i e_i\right)=0$. Define $\beta_i$ such that $\beta_i e_1=(I-P_{e_1^\perp})\alpha_i e_i$. Hence, $P_{e_1^\perp} \left(\sum_{i=2}^n \alpha_ie_i\right) +\left(\sum_{i=2}^n\beta_i \right)e_1=\sum_{i=2}^n\alpha_ie_i$, and finally, $-\left(\sum_{i=2}^n \beta_i\right)e_1+\sum_{i=2}^n\alpha_i e_i =0$, which contradicts the linear independence of $\{e_1,\dotsc,e_n\}$. Orthonormalization proceeds by induction on the dimension. Continuity in $\theta$ follows from continuity of $P_{e_1(\theta)^\perp}$ on $\theta$.
\end{proof}

When the rank of $A_\theta$ changes, we have the following:

\begin{theorem}[Silverman \& Bucy~\cite{Dolezal2}]\label{t:SilvermanBucy}
If $\A_{\theta}$ is analytic in $\Theta_\omega$ where $\mbox{rank}(A_\theta)\leq r$, there exist analytic $M_\theta$, $B_\theta$ with $M_\theta$ nonsingular such that $\A_{\theta}M_\theta=(B_\theta,0)$ and $B_\theta$ has $r$ columns.
\end{theorem}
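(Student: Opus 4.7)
The plan is to reduce to a local statement near an arbitrary $\theta^* \in \Theta_\omega$, construct a candidate $M_\theta$ by Cramer's rule that has the right block structure but a vanishing Jacobian on an analytic subvariety, and then remove the spurious factor analytically. By shrinking to a simply connected neighborhood of $\theta^*$ and using uniqueness of analytic continuation, local germs glue to a global solution. Without loss of generality, the generic rank of $A_\theta$ equals $r$; otherwise replace $r$ by the generic rank $r_0 < r$, pad with $r - r_0$ zero columns, and reduce.

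First I would choose an $r \times r$ submatrix of $A_\theta$ whose determinant $\Delta(\theta)$ is not identically zero (such a submatrix exists precisely because the generic rank equals $r$). Let $J \subset \{1,\dots,n\}$ be the column indices of this submatrix and $Z = \{\theta : \Delta(\theta) = 0\}$, a proper analytic subvariety. For $\theta \notin Z$, Cramer's rule gives, for each $j \notin J$, an analytic identity
\begin{equation*}
\Delta(\theta)\, a_j(\theta) \;=\; \sum_{k \in J} c_{jk}(\theta)\, a_k(\theta),
\end{equation*}
where $a_j(\theta)$ denotes the $j$-th column of $A_\theta$ and $c_{jk}(\theta)$ is analytic (it is a cofactor of the chosen minor). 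By continuity, this identity extends across $Z$ as an analytic identity on the whole neighborhood.

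Next I would assemble a preliminary transformation $M_\theta^{(0)}$ whose first $r$ columns are the standard basis vectors $e_k$, $k \in J$ (selecting the chosen columns) and whose remaining $n-r$ columns are $\Delta(\theta) e_j - \sum_{k \in J} c_{jk}(\theta) e_k$ for $j \notin J$. By construction $A_\theta M_\theta^{(0)} = (B_\theta^{(0)},\, \bm 0)$, where $B_\theta^{(0)}$ is the submatrix on columns $J$. However $\det M_\theta^{(0)} = \pm \Delta(\theta)^{n-r}$, which vanishes on $Z$, so $M_\theta^{(0)}$ is not yet nonsingular.

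The main obstacle is the final step: removing the common factor $\Delta(\theta)$ from the last $n-r$ columns while preserving their membership in $\ker A_\theta$. This amounts to exhibiting an analytic basis of the kernel module $\ker A_\theta$ over the ring $\mathcal{O}_{\theta^*}$ of germs of analytic functions at $\theta^*$. The assumption $\operatorname{rank} A_\theta \leq r$ together with Oka's coherence theorem ensures that the cokernel of $A_\theta$ is a coherent analytic sheaf, hence so is its kernel; since $\mathcal{O}_{\theta^*}$ is a regular local ring, a coherent torsion-free submodule of a free module is itself free, of rank $n-r$. Selecting an analytic basis $\{m_{r+1}(\theta),\dots,m_n(\theta)\}$ of this free kernel module yields the desired last $n - r$ columns of $M_\theta$; combining them with the first $r$ selector columns produces a nonsingular analytic $M_\theta$ with $A_\theta M_\theta = (B_\theta,\bm 0)$. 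The hard part is this coherent-sheaf argument: in one real variable it collapses to Weierstrass division by the zeros of $\Delta$, but in several variables it genuinely requires Oka--Cartan theory to guarantee that the apparent singularities of the Cramer ratios can be resolved analytically.
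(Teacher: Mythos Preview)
The paper does not supply its own proof of this theorem; it is quoted from Silverman and Bucy, and the surrounding discussion makes clear that the intended setting is a single real parameter $\theta$ on an interval, with the multi-parameter analytic case explicitly flagged as unavailable by these methods. In that one-variable setting your construction is essentially correct and, as you yourself observe, collapses to factoring out the zeros of $\Delta(\theta)$: the local ring $\mathcal{O}_{\theta^*}$ of real-analytic germs in one variable is a discrete valuation ring, so every finitely generated torsion-free module over it is free, and the kernel admits an analytic basis that can replace the last $n-r$ columns of your $M_\theta^{(0)}$.

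Your several-variables extension, however, contains a genuine error. The claim that ``a coherent torsion-free submodule of a free module is itself free'' over a regular local ring is false in dimension $\geq 2$: the maximal ideal $(\theta_1,\theta_2)\subset\mathcal{O}_{0}$ is torsion-free of rank one but requires two generators. Worse, the theorem itself fails for two or more parameters. Take $A_\theta=(\theta_1\;\theta_2)$ as a $1\times 2$ matrix with $r=1$. Any analytic column $(m_{12},m_{22})^T$ of $M_\theta$ lying in $\ker A_\theta$ must, by the Koszul relation, be an analytic multiple of $(-\theta_2,\theta_1)^T$ and hence vanish at $\theta=0$; a short computation then gives $\det M_\theta = g(\theta)(\theta_1 m_{11}+\theta_2 m_{21})$, so $\det M_0=0$ regardless of the first column. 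Thus no nonsingular analytic $M_\theta$ exists near the origin. Oka coherence guarantees the kernel sheaf is coherent, not that it is locally free, and coherence alone cannot remove this obstruction---which is exactly why the paper restricts the analytic statement to a single parameter.
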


This result does not hold in general without analyticity. Dole\v{z}al's theorem was originally proved over a real interval but can be extended to multiple parameters by a partition of unity argument~\cite[p. 72]{primer_analytic}. Partitions of unity do not exist in the analytic category~\cite[pp. 174-175]{primer_analytic} so analytic extension is restricted to a neighborhood of $\theta_*$ where a rank changes. Such results can be extended in the continuous case to a basis of the invariant subspace of any eigenvalue by replacing $\A_\theta$ by $\A_\theta-\lambda_\theta I$, invoking continuity of the eigenvalues. In the analytic case, Weierstrass' preparation theorem~\cite[Sec. 6.1]{primer_analytic},\cite[Sec. D.1]{Jonckheere1997} shows $\lambda_\theta$ is analytic along the analytic branches through the multiple eigenvalue with guaranteed analytic basis of the eigenspace.

\subsection{Dephasing in the Hamiltonian Eigenbasis}\label{a:DephasingHamiltonianEigenbasis}

\begin{lemma}\label{l:PQQP}
Let $P$, $Q$ be Hermitian operators in $\H$ that commute. If the (orthonormal) bases of the eigenspaces of $P$ or $Q$ associated with the multiple eigenvalues are freely adjustable then $P$ and $Q$ are simultaneously diagonalizable by a unitary transformation. If they are constrained then $P$ and $Q$ are only simultaneously \emph{block} diagonalizable via a unitary. The results remain valid if $P$ and $Q$ depend continuously on a parameter $\theta$ subject to eigenvalues of constant multiplicities.
\end{lemma}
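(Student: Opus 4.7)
The plan is to reduce everything to the classical spectral theorem applied within each eigenspace of $P$, then lift the construction to continuous families via Dole\v{z}al's theorem and Corollary~\ref{c:orthonormal}.

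First I would invoke the spectral theorem to decompose $\H=\bigoplus_{k=1}^{\bar{N}}E_k(P)$ into the pairwise orthogonal eigenspaces of $P$, with distinct eigenvalues $\lambda_k(P)$ of multiplicities $n_k=\dim E_k(P)$. The commutation relation $[P,Q]=0$ forces $Q$ to leave each $E_k(P)$ invariant: for $v\in E_k(P)$, $PQv=QPv=\lambda_k(P)Qv$, so $Qv\in E_k(P)$. Consequently $Q|_{E_k(P)}$ is a Hermitian operator on $E_k(P)$ and admits its own orthonormal eigenbasis by the spectral theorem applied on this smaller space.

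For the first claim (freely adjustable bases), I would choose, as the orthonormal basis of $E_k(P)$, the eigenbasis of $Q|_{E_k(P)}$. Concatenating these choices across $k$ yields an orthonormal basis of $\H$ whose associated unitary $U$ simultaneously diagonalizes both operators: $U^\dag P U=\diag(\lambda_k(P) I_{n_k})$ and $U^\dag Q U$ is diagonal with the eigenvalues of each $Q|_{E_k(P)}$ block on the diagonal. For the second claim (constrained bases), a prescribed orthonormal basis of $E_k(P)$ will generally fail to diagonalize $Q|_{E_k(P)}$. In that basis $P$ is $\lambda_k(P) I_{n_k}$ on each block, while $Q|_{E_k(P)}$ is a general $n_k\times n_k$ Hermitian matrix, so one obtains only simultaneous block-diagonalization with block sizes $n_k$.

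For the parameter-dependent statement, with $P_\theta$ and $Q_\theta$ continuous in $\theta$ and eigenvalue multiplicities constant, I would first apply Dole\v{z}al's theorem (Th.~\ref{t:Dolezal}) together with Corollary~\ref{c:orthonormal} to the continuous rank-constant family $P_\theta-\lambda_k(\theta) I$ to extract a continuous orthonormal basis of each eigenspace $E_k(P_\theta)$; assembling these yields a continuous unitary $U_\theta^{(1)}$ realizing the block structure. Representing $Q_\theta|_{E_k(P_\theta)}$ in that continuous basis gives a continuous family of $n_k\times n_k$ Hermitian matrices, and a second application of Dole\v{z}al's theorem, under constant-multiplicity of the within-block eigenvalues, produces a continuous block-unitary $U_\theta^{(2)}$ diagonalizing each block. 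The composition $U_\theta=U_\theta^{(1)}U_\theta^{(2)}$ then simultaneously diagonalizes $P_\theta$ and $Q_\theta$ continuously in $\theta$.

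The main obstacle I anticipate is precisely this continuity issue. The purely algebraic portion is essentially textbook spectral theory, but preserving continuity through two nested applications of Dole\v{z}al's theorem requires that the multiplicities of the eigenvalues of $Q_\theta|_{E_k(P_\theta)}$ also remain constant, a condition implicit in the hypothesis but easy to overlook. At a degeneracy crossing of the inner problem this would fail in the merely continuous category; one would then have to retreat to the analytic category and invoke Th.~\ref{t:SilvermanBucy} locally near $\theta_*$, in line with the analytic extension already used elsewhere in the appendix.
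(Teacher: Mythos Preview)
Your argument is correct and rests on the same invariant-subspace idea as the paper: commutativity forces $Q$ to preserve each eigenspace of $P$, and the spectral theorem applied blockwise then gives simultaneous (block) diagonalization. The paper organizes this slightly differently---it works symmetrically with \emph{both} eigendecompositions $\{\V_{\lambda_i}\}$ and $\{\W_{\mu_j}\}$, expresses $\{\W_{\mu_j}\}=\oplus_{i\in I(j)}\{\bar{\V}_{\lambda_i}\}$, and in the constrained case introduces explicit rotations $R_{\mu_j}$ relating the two bases---whereas you fix the $P$-decomposition and restrict $Q$ to each block; this is the more standard textbook presentation and arguably cleaner. For the parametric part, the paper simply invokes Dole\v{z}al's theorem once; your two-stage application (first to $P_\theta-\lambda_k(\theta)I$, then to $Q_\theta|_{E_k(P_\theta)}$) is more explicit, and your observation that the inner restriction must also have constant multiplicities is a genuine refinement that the paper glosses over.
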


\begin{proof}
Let $P\V_{\lambda_i}=\lambda_i\V_{\lambda_i}$ and $Q\W_{\mu_j}=\mu_j\W_{\mu_j}$ be the eigenvalues $\lambda_i, \mu_j$ and orthonormalized eigenbases $\V_{\lambda_i}, \W_{\mu_j}$ of $P$ and $Q$. $PQ=QP$ yields
\begin{equation}\label{e:difficulty}
  Q\V_{\lambda_i}\subseteq \{\V_{\lambda_i}\}, \quad P\W_{\mu_j}\subseteq \{\W_{\mu_j}\},
\end{equation}
where $\{\V_{\lambda_i}\}$, $\{\W_{\mu_j}\}$ denote the \emph{subspaces} spanned by the (orthonormal) columns of $\V_{\lambda_i}$ and $\W_{\mu_j}$, respectively. It follows from Eq.~\eqref{e:difficulty} that $\W_{\mu_j}$ is an invariant subspace of $P$ and therefore must consist of eigensubspaces of $P$. To express $\W_{\mu_j}$ in such eigensubspaces of $P$, choose a set $I(j)$ of $i$-indices such that $\{\W_{\mu_j}\} \subseteq \oplus_{i \in I(j)} \{ \V_{i} \}$. In each $\{ \V_{\lambda_{i}} \}$, choose a (possibly reduced) basis $\bar{\V}_{\lambda_{i}}$ such that
\begin{equation}\label{e:oWofV}
  \{\W_{\mu_j}\}=\oplus_{i \in I(j)} \{\bar{\V}_{\lambda_{i}}\}.
\end{equation}
$\oplus_j \{ \W_{\mu_j}\}=\mathbf{H}$ implies $\oplus_j\oplus_{i \in I(j)} \{\bar{\V}_{\lambda_{i}}\}=\mathbf{H}$. To transform the \emph{subspace} equality to a \emph{basis} equality, let $\boxplus_i \V_{\lambda_i}=(\V_{\lambda_1},\V_{\lambda_2}, \dotsc )$ denote \emph{basis} rather than \emph{subspace} addition.

\emph{Case 1.} Assume both bases $\boxplus_j \W_{\mu_j} $ and $\boxplus_i\V_{\lambda_i}$ are given. To refine the subspace equality to an equality between bases, choose a rotation $R_{\mu_j}$ such that
\begin{equation}\label{e:WofV}
  \W_{\mu_j}=\left(\boxplus_{i \in I(j)} \bar{\V}_{\lambda_{i}}\right)R_{\mu_j}.
\end{equation}

Rewriting the eigenvector/eigenvalue equations for $P$ and $Q$ in terms of $\bar{\V}_{\lambda_i}$ using Eq.~\eqref{e:WofV} yields
\begin{align*}
  P \mathopen{}\mathclose{\left( \boxplus_j\boxplus_{i\in I(j)} \bar{\V}_{\lambda_i} \right)}
  &= \left(\boxplus_j \boxplus_{i\in I(j)} \bar{\V}_{\lambda_i} \right)\diag\{\Lambda_{I(j)}\},\\
  Q \mathopen{}\mathclose{\left(\boxplus_j\left( \boxplus_{i\in I(j)} \bar{\V}_{\lambda_i}R_{\mu_j}\right)\right)}
  &= \left(\boxplus_j \left( \boxplus_{i\in I(j)} \bar{\V}_{\lambda_i}R_{\mu_j}\right)\right)\diag\{\mu_j\}.
\end{align*}
Moving $R_{\mu_j}$ to the RHS of the second equation yields
\begin{align*}
  P\mathopen{}\mathclose{\left( \boxplus_j\boxplus_{i\in I(j)} \bar{\V}_{\lambda_i} \right)}
  &= \left(\boxplus_j \boxplus_{i\in I(j)} \bar{\V}_{\lambda_i} \right)\diag\{\Lambda_{I(j)}\},\\
  Q\mathopen{}\mathclose{\left(\boxplus_j\left( \boxplus_{i\in I(j)} \bar{\V}_{\lambda_i}\right)\right)}
  &= \left(\boxplus_j \left( \boxplus_{i\in I(j)} \bar{\V}_{\lambda_i}\right)\right)\diag\{\mu_jR^\dagger_{\mu_j}\}.
\end{align*}
$\boxplus_j\boxplus_{i\in I(j)} \bar{\V}_{\lambda_i}$ simultaneously \emph{block}-diagonalizes $P$ and $Q$.

\emph{Case 2.} By choosing the basis $\V_{\lambda_i}$, Eq.~\eqref{e:WofV} is simplified to $\W_{\mu_j}=\boxplus_{i \in I(j)} \bar{\V}_{\lambda_{i}}$. The rotations are no longer needed and $P$ and $Q$ are simultaneously diagonalizable.

Th.~\ref{t:Dolezal} guarantees that the results remain valid if $P$ and $Q$ depend continuously on a parameter $\theta$ subject to eigenvalues of constant multiplicities.
\end{proof}

\begin{corollary}\label{c:kernel}
Under the same conditions as Lemma~\ref{l:PQQP}, the kernel of one operator equals the direct sum of selected invariant subspaces of the other. If one such invariant subspace corresponds to an eigenvalue $\ne 0$, the kernels of $P$ and $Q$ are not coincidental.
\end{corollary}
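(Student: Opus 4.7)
The plan is to read off both claims directly from the block-diagonal structure produced in the proof of Lemma~\ref{l:PQQP}. Recall that the eigenspaces of $Q$ decompose according to Eq.~\eqref{e:oWofV} as $\{\W_{\mu_j}\}=\oplus_{i\in I(j)}\{\bar{\V}_{\lambda_i}\}$, where each $\{\bar{\V}_{\lambda_i}\}$ sits inside the $P$-eigenspace for $\lambda_i$ and is therefore $P$-invariant. The first step is to specialize to $j_0$ with $\mu_{j_0}=0$: by the very definition of the kernel, $\ker(Q)=\{\W_0\}=\oplus_{i\in I(j_0)}\{\bar{\V}_{\lambda_i}\}$, exhibiting $\ker(Q)$ as a direct sum of invariant subspaces of $P$. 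If $0$ is not in the spectrum of $Q$, the statement is vacuous with $\ker(Q)=\{\vec{0}\}$ viewed as the empty direct sum. The symmetry of Lemma~\ref{l:PQQP} in the roles of $P$ and $Q$ yields the analogous decomposition for $\ker(P)$, establishing the first claim.

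For the second claim, suppose some subspace $\{\bar{\V}_{\lambda_{i^*}}\}$ appearing in the decomposition of $\ker(Q)$ corresponds to $\lambda_{i^*}\neq 0$. Pick any nonzero $\vec{v}\in\{\bar{\V}_{\lambda_{i^*}}\}$: then $Q\vec{v}=\vec{0}$ since $\vec{v}\in\ker(Q)$, whereas $P\vec{v}=\lambda_{i^*}\vec{v}\neq\vec{0}$. Hence $\vec{v}\in\ker(Q)\setminus\ker(P)$, and the two kernels cannot coincide.

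I do not anticipate a substantial obstacle: the corollary is essentially a dictionary-lookup on the decomposition already established in Lemma~\ref{l:PQQP}. The only bookkeeping point requiring care is verifying that each $\{\bar{\V}_{\lambda_i}\}$ on the right-hand side is genuinely $P$-invariant; this is immediate because $\bar{\V}_{\lambda_i}$ is by construction a sub-basis of the full $P$-eigenspace $\V_{\lambda_i}$, and $P$ acts as the scalar $\lambda_i$ on its span. The continuity and constant-multiplicity hypothesis inherited from Lemma~\ref{l:PQQP} then ensures the decomposition varies continuously with the parameter $\theta$, so the conclusion remains meaningful in the parameterized setting used throughout Sec.~\ref{sec:pure_dephasing}.
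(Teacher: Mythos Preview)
Your proof is correct and follows essentially the same approach as the paper: both specialize the decomposition $\{\W_{\mu_j}\}=\oplus_{i\in I(j)}\{\bar{\V}_{\lambda_i}\}$ from Lemma~\ref{l:PQQP} to $\mu_j=0$ to exhibit $\ker(Q)$ as a direct sum of $P$-invariant subspaces, and then observe that a summand with $\lambda_{i^*}\neq 0$ forces the kernels to differ. Your version is somewhat more explicit (handling the vacuous case, picking a witness vector for the second claim, and noting the symmetry in $P$ and $Q$), but the underlying argument is the same.
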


\begin{proof}
Setting $\mu_j=0$ in Eq.~\eqref{e:oWofV}, $\{\W_{\mu_j=0}\}$ becomes the kernel of $Q$ and $\ker(Q)=\oplus_{i \in I(j)} \{\bar{\V}_{\lambda_{I(j)}}\}$. Therefore, the $\ker(Q)$ is made up of some invariant subspaces of $P$. If one such invariant subspace has corresponding eigenvalue $\ne 0$, the two kernels are not coincidental.
\end{proof}

Lemma~\ref{l:PQQP} in its second form is known, proved via minimal polynomial methods~\cite{KeithConrad} rather than via invariant subspaces.

\subsubsection*{Proof of Theorem \ref{t:HVAS}}
For $\A$ and $\S$ to commute, we have to show that they have the same eigenspaces. Let us begin with the kernel, the eigenspace of the $0$ eigenvalue. Recall that the invariant directions are $\{\Pi_k\rho_0\Pi_k\}_{k=1}^N$. Therefore, in the Bloch representation, we easily find a basis for the kernel of $\A+\delta \S$:
\[
  \vec{u}_{N^2-N+k}=(\Tr((\Pi_k\rho_0\Pi_k)\vec{\sigma}_n))_{n=1}^{N^2}, \quad k=1,\dotsc,N.
\]
This kernel basis does not depend on $\delta$ and is the common kernel of $\A$ and $\S$. We further have the freedom to orthonormalize this basis by Corollary~\ref{c:orthonormal}. Next, for generically nonvanishing eigenvalues, elementary manipulations show that
\[
  (-\imath\Ad_H +\delta \mathfrak{L}(V))(\Pi_k\rho_0\Pi_\ell)=(-\imath \omega_{k\ell}+\delta \gamma_{k\ell})(\Pi_k\rho_0\Pi_\ell),
\]
i.e., $(\Pi_k\rho_0\Pi_\ell)$ is an eigenvector of the right-hand side of Eq.~\eqref{e:QME1} associated with the eigenvalue $-\imath \omega_{k\ell}+\delta \gamma_{k\ell} \ne 0$. Thus, in the Bloch representation the eigenvectors associated with the nonvanishing eigenvalues of $\A+\delta \S$ are
\begin{equation}\label{e:eig}
  \vec{u}_{k \ne \ell}=(\Tr((\Pi_k\rho_0\Pi_\ell)\vec{\sigma}_n))_{n=1}^{N^2}, \quad 1 \leq k \ne \ell \leq N.
\end{equation}
The eigenvalues remain the same, as can be seen from the commutativity of the following diagram:
\[
  \begin{array}{ccc}
  \mathbf{Herm} & \stackrel{-\imath\Ad_H +\delta \mathfrak{L}(V)}{\longrightarrow} & \mathbf{Herm} \\
  \downarrow &                                              & \downarrow \\
  \mathbb{R}^{N^2} & \stackrel{\A+\delta \S}{\longrightarrow} & \mathbb{R}^{N^2}
  \end{array}
\]
together with the linearity of the Bloch representation $\downarrow$. Relabel the eigenvectors in Eq.~\eqref{e:eig} as $\{\vec{u}_n\}_{n=1}^{N^2-N}$. Lemma~\ref{l:orthog} shows this set is orthonormal, and together with the kernel defines a unitary matrix $(\vec{u}_1,\dotsc, \vec{u}_{N^2-N}, \vec{u}_{N^2-N+1}, \dotsc, \vec{u}_{N^2})$ that diagonalizes $\A+\delta \S$ for all $\delta$. $U$ does not depend on $\delta$ and \emph{simultaneously} diagonalizes $\A$ and $\S$: setting $\delta=0$ implies that $\U$ diagonalizes $\A$. $\U$ also diagonalizes $\frac{1}{\delta+1} \A+\frac{\delta}{1+\delta}\S$. Setting $\delta \uparrow \infty$ implies that $\U$ diagonalizes $\S$. Thus $\A$ and $\S$ have the same eigenvectors and $[\A,\S]=0$.\hfill$\blacksquare$

\bibliographystyle{IEEEtran}
\bibliography{biblio/edmond,biblio/physics,biblio/oxford}

\begin{IEEEbiography}[{\includegraphics[width=1in, height=1.25in,clip,keepaspectratio]{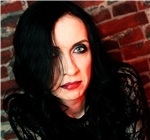}}]{Sophie Schirmer (Shermer)} is an Associate Professor in Physics at Swansea University, UK, and previously held positions as Advanced Research Fellow of the Engineering \& Physical Sciences Research Council at Cambridge University, Visiting Professor at Kuopio University, Finland, and positions at the Open University and University of Oregon. SS's research interests include nano-science at the quantum edge and quantum engineering, especially modeling, control and characterization of quantum systems.
\end{IEEEbiography}

\begin{IEEEbiography}[{\includegraphics[width=1in,height=1.25in,clip,keepaspectratio]{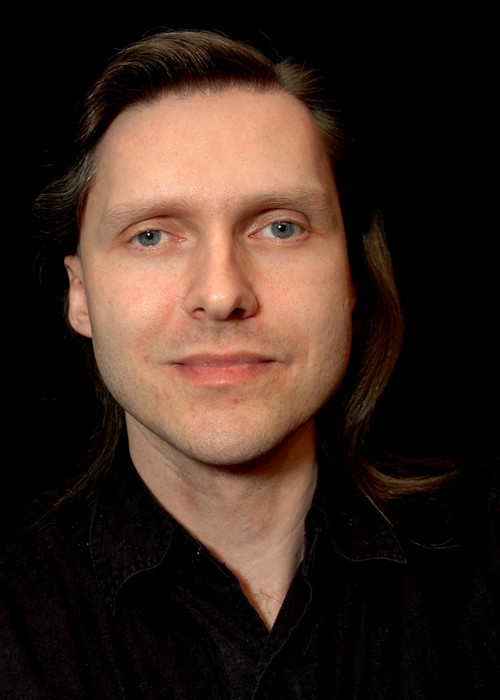}}]{Frank C.\ Langbein}
received his Mathematics degree from Stuttgart University, Germany in 1998 and a Ph.D. from Cardiff University, Wales, U.K. in 2003. He is currently a senior lecturer at the School of Computer Science and Informatics, Cardiff University, where he is a member of the visual computing group. His research interests include modeling, simulation, control and machine learning applied to quantum technologies, geometric modeling and healthcare. He is a member of the IEEE and the AMS.
\end{IEEEbiography}

\begin{IEEEbiography}[{\includegraphics[width=1in,height=1.25in,clip,keepaspectratio]{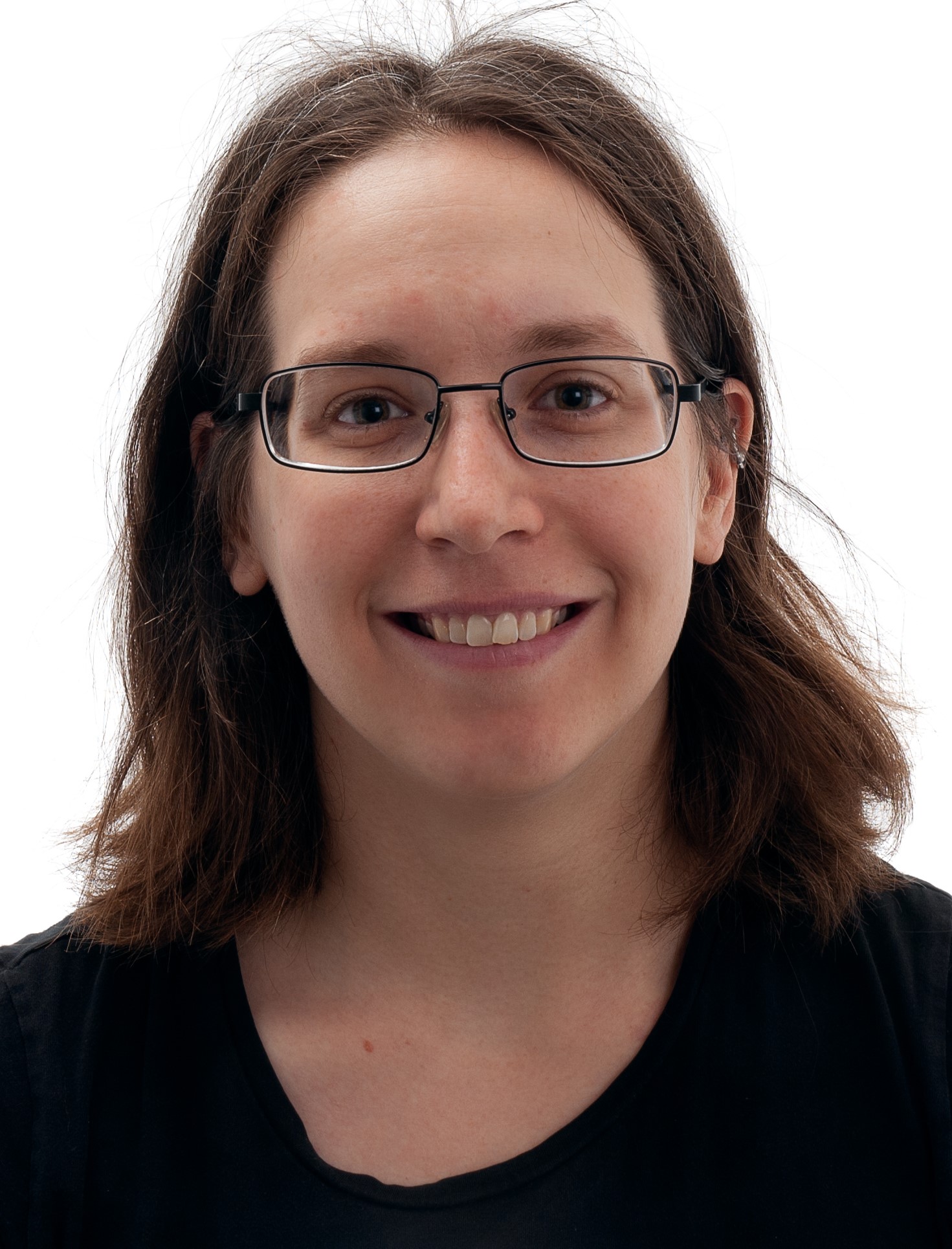}}]{Carrie A.\ Weidner}
received B.S. degrees in Engineering Physics and Applied Mathematics in 2010 and a Ph.D. in physics in 2018, all from the University of Colorado Boulder. After some time as a postdoctoral researcher, then assistant professor at Aarhus University, she is a lecturer at the University of Bristol Quantum Engineering Technology Laboratories. Her current research focuses on quantum control, sensing, and simulation, especially with ultracold atoms.
\end{IEEEbiography}

\begin{IEEEbiography}[{\includegraphics[width=1in,height=1.25in,clip,keepaspectratio]{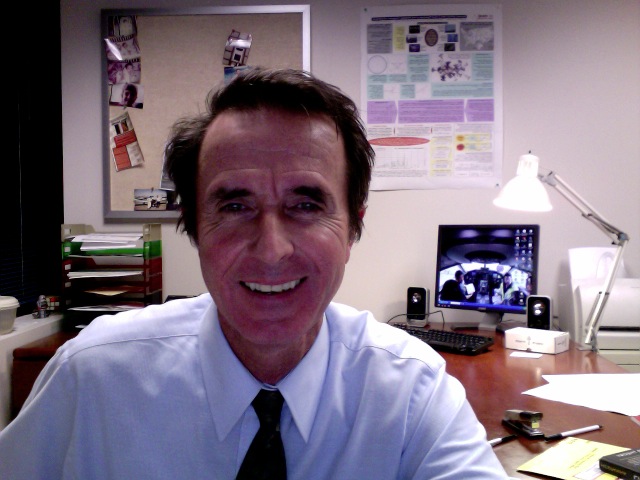}}]{Edmond A.\ Jonckheere}
received his Engineering degree from the University of Louvain, Belgium, in 1973, Dr.-Eng.\ in Aerospace Engineering from the Universit\'{e} Paul Sabatier, Toulouse, France, in 1975, and Ph.D. in Electrical Engineering from the University of Southern California in 1978. In 1973-1975, he was a Research Fellow of the European Space Agency; in 1979, he was with the Philips Research Laboratory, Brussels, Belgium, and in 1980 he joined the University of Southern California, where he is a Professor of Electrical Engineering and Mathematics and member of the Centers for Applied Mathematical Sciences and Quantum Information Science and Technology. He is a Life Fellow of the IEEE whose research interests include conventional vs quantum control, adiabatic quantum computations, wireless networking and the power grid.
\end{IEEEbiography}

\vfill

\end{document}